\theoremstyle{plain}
\newtheorem{theorem}{Theorem}  
\newtheorem{lemma}[theorem]{Lemma}
\newtheorem{proposition}[theorem]{Proposition}
\theoremstyle{definition}
\newtheorem{remark}[theorem]{Remark}
\newcommand{\R}{\mathbb{R}}
\newcommand{\N}{\mathbb{N}}
\newcommand{\argmin}{\mathop{\mathrm{arg\,min}}}
\newcommand{\IT}{\, {\mathop{\mathrm{Im}}} \,}
\newcommand{\Reg}{{\ensuremath{\mathcal{R}}}}
\newcommand{\RT}{\, {\mathop{\mathrm{Re}}} \,}
\newcommand{\supp}{\mathop{\mathrm{supp}}}
\newcommand{\expfct}[1]{e^{#1}}
\def\absnoisy{a^\sigma}
\def\fad{{f_\alpha^\delta}}
\def\fdag{{f^\dag}}
\def\gdag{g^\dag}
\def\grad{\nabla}
\def\IC{\mathbb{C}}
\def\IN{\mathbb{N}}
\def\IR{\mathbb{R}}
\def\knot{\eta}
\def\p{p}
\def\Pim{v}
\def\Pre{u}
\def\psidag{\psi^\dag}
\def\psinoisy{\psi^\delta}
\def\st{~ : ~}
\def\suppk{\ensuremath{\mathfrak{P}}}
\def\sY{s}
\def\Tad{{T_\alpha^\delta}}
\def\tX{\tau}
\def\xad{{x_\alpha^\delta}}
\def\yd{y^\delta}
\def\ydag{y^\dag}
\def\omlow{\omega_{\rm low}}
\def\omup{\omega_{\rm up}}
\def\omcw{\omega_{\rm cw}}
\newcommand{\de}{\,\mathrm d}
\newcommand{\diff}{{\,\mathrm{d}}}
\title[Variational deautoconvolution and phase retrieval]{Variational regularization of complex deautoconvolution and phase retrieval in ultrashort laser pulse characterization}
\author[S.W.~Anzengruber]{Stephan W.~Anzengruber\,}
\address[S.W.~Anzengruber]{J.~Radon Institute for Computational and Applied Mathematics (RICAM), 4040 Linz, Austria, {\rm and}
Faculty of Mathematics, TU Chemnitz, 09107 Chemnitz,  Germany}
\email{\tt stephan.anzengruber@ricam.oeaw.ac.at}
\author[S.~B\"urger]{Steven B\"urger\,}
\address[S.~B\"urger]{Faculty of Mathematics, TU Chemnitz,
  09107 Chemnitz,  Germany}
\email{\tt steven.buerger@mathematik.tu-chemnitz.de}
\author[B.~Hofmann]{Bernd Hofmann\,}
\address[B.~Hofmann]{Faculty of Mathematics, TU Chemnitz,
  09107 Chemnitz,  Germany}
\email{\tt bernd.hofmann@mathematik.tu-chemnitz.de}
\author[G.~Steinmeyer]{G\"unter Steinmeyer}
\address[G.~Steinmeyer]{Max-Born-Institute,  Max-Born-Stra\ss e 2a, 12489 Berlin, Germany}
\email{\tt steinmey@mbi-berlin.de}
\date{\today}
\begin{document}

\begin{abstract}
The SD-SPIDER method for the characterization of ultrashort laser pulses requires the solution of a nonlinear integral equation of autoconvolution type with a device-based kernel function.
Taking into account the analytical background of a variational regularization approach for solving the corresponding ill-posed operator equation formulated in complex-valued
$L^2$-spaces over finite real intervals, we suggest and evaluate numerical procedures using NURBS and the TIGRA method for calculating the regularized solutions in a stable manner.
In this context, besides the complex deautoconvolution problem with noisy but full data, a phase retrieval problem is introduced which adapts to the experimental state of the art in laser optics.
For the treatment of this problem facet, which is formulated as a tensor product operator equation, we derive well-posedness of variational regularization methods.
Case studies with synthetic and real optical data show the capability of the implemented approach as well as its limitation due to measurement deficits.
\end{abstract}

\maketitle

\vspace{1.0cm}

\noindent {\bf MSC2010 subject classification:} 47A52, 47J06, 78A60, 65R32, 45Q05,  65J15

\vspace{0.3cm}

\noindent {\bf Keywords:} Deautoconvolution, phase retrieval, regularization, laser pulse characterization, ill-posed problem, SD-SPIDER, NURBS, TIGRA.

\section{Introduction}\label{s1}

About two decades ago and motivated by a problem from spectroscopy, namely the evaluation of highly resolved functions of the density of unoccupied states from appearance-potential spectra (cf, e.g.,~\cite{Schleietal83}), the inverse problem of deautoconvolution came into the focus of the mathematical literature for the first time (cf.~\cite{Bau91,gh94}). In particular, the stable approximate determination of real functions $f$ over $\R$ with compact support,
say $\supp(f) \subseteq [0,1]$,
from noisy data of its self-convolution $g$ with $\supp(g) \subseteq [0,2]$ became of interest, which is equivalent to the solution of the Volterra type nonlinear integral equation
\begin{equation} \label{eq:real}
 \int\limits_0^\sY f(\sY-\tX)\,f(\tX)\,d\tX \,=\,g(\sY), \qquad 0 \le \sY \le 2,
\end{equation}
occurring also in stochastics if the probability density function $f(\tX)$ with support in $[0,1]$ of a random variable $\mathfrak{X}$ is to be determined from data of the density function $g(\sY)$. In this context, $g(\sY)$
corresponds to the random variable $\mathfrak{Y}+\mathfrak{Z}$, where $\mathfrak{Y}$ and  $\mathfrak{Z}$ are stochastically independent random variables and $\mathfrak{X},\mathfrak{Y},\mathfrak{Z}$ are identically distributed.
It was shown in \cite{gh94} resp.~\cite{FleiHof96} that (\ref{eq:real}) written as an operator equation with the nonlinear forward operator of autoconvolution between the real spaces $L^2(0,1)$ and $L^2(0,1)$ resp.~$L^2(0,2)$ of quadratically integrable functions is locally ill-posed everywhere in the sense of \cite[Definition~2]{HofSch94}. Furthermore, regularization approaches were developed and evaluated, where the astonishing fact appeared that in spite of the simple quadratic structure of the forward operator convergence rates results are difficult to obtain. Indeed, autoconvolution `scrambles' the input function in such a way that qualified nonlinearity conditions like the tangential cone condition and
classical source conditions are generally not satisfied. For details of the corresponding results we refer to \cite{FleiHof96,gh94,Janno97,Janno00} and the recent paper \cite{BuerHof15}. Moreover, we refer to \cite{ChoiLan05, Flem14} for alternative approaches to deautoconvolution.

A completely new approach to the autoconvolution problem from a mathematical point of view was started in 2011 at the TU Chemnitz in collaboration with the research group `Solid State Light Sources' of the Max Born Institute for Non-linear Optics and Short Pulse Spectroscopy, Berlin. The new onset was motivated by the development of the \emph{SD-SPIDER} (self-diffraction spectral phase interferometry for direct electric-field reconstruction) technique in ultrashort (femtosecond) laser pulse characterization at the Max Born institute. A mathematical model for this method can be formulated by generalizing (\ref{eq:real}) as a kernel-based autoconvolution equation
\begin{equation} \label{eq:complex}
\int\limits_0^\sY k(\sY,\tX)\,f(\sY-\tX)\,f(\tX)\,d\tX\,=\,g(\sY), \qquad 0 \le \sY \le 2,
\end{equation}
with complex-valued functions $f$ over $\R$ with $\supp(f) \subseteq [0,1]$ and $g$ over $\R$ with $\supp(g) \subseteq [0,2]$, and a complex-valued kernel $k(\sY,\tX)$ with $(\sY,\tX) \in \R^2$ and $\supp(k) \subseteq [0,2] \times [0,1]$. In the sequel we use the polar coordinate representations
\begin{equation} \label{eq:polar}
f(\tX)= a(\tX)\, \expfct{i\, \varphi(\tX)} \qquad \mbox{and} \qquad g(\sY)=b(\sY)\, \expfct{i\, \psi(\sY)}
\end{equation}
for the searched-for function $f$ and the right-hand side function $g$, respectively.
For a detailed explanation of the physical background, i.e., of the SD-SPIDER  approach in light of (\ref{eq:complex}) and concerning the availability
of optical measurements for the amplitude functions $a,b$ and the phase functions $\varphi,\psi$ in (\ref{eq:polar}), we refer to the subsequent Section~\ref{s2}.

To be precise, we consider (\ref{eq:complex}) as a nonlinear operator equation in a Hilbert space setting. Our focus is on the spaces
\begin{equation} \label{eq:spaces}
X:=L^2_{\IC}(0,1) \quad \mbox{and} \quad Y:=L^2_{\IC}(0,2)
\end{equation}
of complex-valued square integrable functions with associated norms $\|\cdot\|_X,\|\cdot\|_Y $ and inner products $\langle \cdot,\cdot\rangle_X,\langle \cdot,\cdot\rangle_Y$, respectively. Then we write (\ref{eq:complex}) in the concise form
\begin{equation} \label{eq:opeq}
F(f)\,=\,g,\qquad f \in X,\quad g \in Y,
\end{equation}
where $\fdag \in X$ denotes exact solutions to (\ref{eq:opeq}) for given exact right-hand side $g=\gdag \in Y$, and where the forward operator $F: X \to  Y$, taking into account the specific support intervals of pre-image and image functions, is defined as
  \begin{equation} \label{eq:F}
    \begin{aligned}
              [F(f)] (\sY) & = [B(f)](\sY)\,\expfct{i\, [\Psi(f)](\sY)} \\
                     & := \int \limits_{\max(\sY-1,0)}^{\min(\sY,1)} k(\sY,\tX)\, f(\sY-\tX)\, f(\tX)\, d\tX, \qquad 0 \le \sY \le 2,
    \end{aligned}
  \end{equation}
for preimage function $f(\tX)=a(\tX)\,\expfct{i\, \varphi(\tX)}$, $0 \le \tX \le 1$.
From (\ref{eq:F}) it is seen that the support of the kernel function $k$ is contained in the parallelogram
  \begin{equation} \label{eq:suppk}
\suppk:= \{ (\sY, \tX) \in [0,2] \times [0,1] \st ~ 0 \leq \tX \leq 1, ~ \tX \leq \sY \leq \tX+1 \}.
  \end{equation}
According to physical models (cf. \cite{BSKGBH15, Ge11, GHBKS13}) the kernel functions occurring in laser pulse characterization are continuous complex-valued functions on $\suppk$. We will thus restrict our considerations to  kernel functions, such that
  \begin{equation} \label{eq:kernel}
    \supp(k) \subset \suppk, \quad k \in C (\suppk), \quad \mbox{and } \quad k(\sY,\tX)=k(\sY,\sY-\tX) \mbox{ for } (\sY, \tX) \in \suppk.
  \end{equation}
The last assumption in (\ref{eq:kernel}) indeed holds without loss of generality as we have,  for arbitrary kernel functions $k(\sY,\tX)$ and all $\sY \in [0,2]$, the identity
$$ \int \limits_{\max(\sY-1,0)}^{\min(\sY,1)} k(\sY,\tX) f(\sY-\tX) f(\tX)\, d\tX=\int \limits_{\max(\sY-1,0)}^{\min(\sY,1)} \frac{k(\sY,\tX)+k(\sY,\sY-\tX)}{2} f(\sY-\tX) f(\tX) d\tX.$$
We mention that the structure (\ref{eq:kernel}) is in particular satisfied if the kernel is generated by a complex-valued function $\kappa$ with $\supp(\kappa) \subset [0,1]$ and $\kappa \in C [0,1]$ such that
\begin{equation} \label{eq:simplekernel}
  k(\sY,\tX)\,=\, \kappa(\tX) \, \kappa(\sY-\tX) \quad \mbox{if } (\tX,\sY) \in \suppk, \qquad \mbox{and} \quad k(\sY,\tX)=0 \quad \mbox{otherwise}.
\end{equation}
For some statements in Section~\ref{s3}, we will be forced to focus on the special case (\ref{eq:simplekernel}).

In this paper, we distinguish two different inverse problems in the context of the forward operator $F$ from (\ref{eq:F}) under the assumption (\ref{eq:kernel}) for the kernel $k$, which are briefly presented in the following.

\medskip

\subsection{The deautoconvolution problem}   \label{ssec:deautoconv}
We call the inverse problem of identifying the complex-valued function $f$ solving the operator equation (\ref{eq:opeq}) deautoconvolution problem. In this context, it is assumed that only noisy data $g^\delta$ of the complex-valued function $\gdag$ with
  \begin{equation} \label{eq:noise}
    \|\gdag-g^\delta\|_{Y} \le \delta
  \end{equation}
are available for a reasonably small noise level $\delta>0$.

\medskip

 This deautoconvolution problem was tackled in \cite{GHBKS13} in a direct manner, where theoretic consideration were only made for the trivial kernel $k \equiv 1$.  For the stable approximate solution including a
 nontrivial kernel, an iterative regularization approach based on a variant of the Levenberg-Marquardt method was used there, which showed quite good results in numerical case studies with synthetic data. In this paper,
 we extend assertions on properties of the forward operator $F$ and on the solution of the operator equation (\ref{eq:opeq}) to kernels $k$ from (\ref{eq:kernel}) or (\ref{eq:simplekernel}).

\smallskip

The real data situation in laser optics, see for details Section~\ref{s2} below, has an advantage and a disadvantage. Fortunately, noisy measurements for the amplitude function $a$ in the solution $f$ (see (\ref{eq:polar})) can be provided. These data were exploited in \cite{GHBKS13} to control a regularization parameter. However, measurements for the amplitude function $b$ in $g$, which were required in combination with data for the phase $\psi$, are unfortunately not sufficiently reliable in practice. As a result, the method in \cite{GHBKS13} failed for real data from optical experiments.
In order to resolve this shortcoming, we clarify the objective and the data situation in the following manner.

\medskip

\subsection{The phase retrieval problem}   \label{ssec:phaseretrieval}
We call the inverse problem of identifying the phase function $\varphi$ in the solution $f$ (see (\ref{eq:polar})) of equation (\ref{eq:opeq}) from noisy data of the phase function $\psi$ in the right-hand side $g$ phase retrieval problem when noisy data of the amplitude function $a$ in $f$, but no data of the amplitude function $b$ in $g$ are available.

\medskip

A first trial of a more adapted regularization approach with focus on the phase retrieval problem was included in the recent paper \cite{BSKGBH15} taking into account the real data situation. However, the well-posedness of the non-standard variational regularization methods was not considered there. We are going to
close this gap in the present paper, in addition to an essential improvement of the numerical implementation based on \emph{NURBS} (Non-Uniform Rational B-splines; see, e.g.,~\cite{pt97}) in combination with a \emph{TIGRA}-type algorithm (the name is derived from TIkhonov-GRAdient method; cf.~\cite{Ramlau02,Ramlau03,WARH15}).

\medskip

The paper is organized as follows. After an explanation of the background from laser optics in Section~\ref{s2} we summarize properties of the forward operator $F$ as well as of the deautoconvolution problem solving equation (\ref{eq:opeq}) in Section~\ref{s3}. Well-posedness results for the phase retrieval problem are presented in Section~\ref{s4}.
Turning to numerical considerations, we then briefly introduce planar NURBS and describe how they may be utilized in the context of the complex-valued deautoconvolution problems in Section \ref{sec:NURBS}.
Finally, in Section \ref{sec:numerics} we show numerical results obtained with a TIGRA-type method for both synthetic and real data.

\section{Physical Background}\label{s2}

The motivation for the complex-valued and kernel-based deconvolution problem of solving (\ref{eq:opeq}) in the introduction is the SD-SPIDER (Self-Diffraction Spectral Phase Interferometry for Direct Electric-field Reconstruction) method in laser optics.
The aim of this method is the reconstruction of the electric field $E(t)$ of ultrashort (femtosecond) laser pulses, which is a real-valued oscillatory function of time $t$. This function is usually decomposed into
an amplitude part and an oscillating part by
\begin{equation}
 E(t)={\sqrt{I(t)}}\cos\{\omega_0 t+{\eta(t)}\},
\end{equation}
where $I(t)$ is the intensity (up to re-normalization), $\omega_0$ the carrier frequency, and $\eta(t)$ is called temporal phase. Since measurements are available for the spectral domain only,
we have to consider the Fourier transform of $E(t)$
\begin{equation}
 \hat E(\omega):=\frac1{\sqrt{2\pi}}\int\limits_{-\infty}^\infty E(t) \expfct{-i\omega t} \mathrm dt
\end{equation}
as a complex-valued function of the frequency $\omega$.
The Fourier transform can be written in polar coordinates as
\begin{equation}
 \hat E(\omega)={\sqrt{S(\omega)}} \expfct{i\phi(\omega)},
\end{equation}
where $S(\omega)$ is the spectral power density (up to re-normalization), and $\phi(\omega)$ is called spectral phase. Fortunately, one is able to measure the spectrum directly, but contaminated with noise. Hence, the approximate identification of the desired physical quantity $E(t)$ by inverse Fourier transform requires first the determination of $\phi$. Careful denoising, e.g., by Fourier filtering or adjacent averaging of the measurements for $S$ may prove helpful to reduce a possible oscillatory behavior of the algorithms. Since the spectral phase $\phi$ cannot be measured directly, nonlinear optical processes have to be employed to infer the spectral phase from an indirect measurement. To this end, one suitable process is self-diffraction (SD), which is a spectrally degenerate variant of the more general four-wave mixing process.
The electric field of the generated pulses in the spectral domain $\hat E_{\rm SD}(\omega)$ is related to $\hat E(\omega)$ through
\begin{equation}\label{eq1}
 \hat E_{\rm SD}(\omega)=\int\limits_{0}^{\omega+\omcw}K(\omega,\Omega)\hat E(\omega+\omcw-\Omega)\hat E(\Omega)\mathrm d \Omega.
\end{equation}
The SD process involves interaction with a continuous wave at a known frequency $\omcw$, which has been incorporated into the kernel function $K(\omega,\Omega)$.
Moreover, the kernel function $K(\omega,\Omega)$ as an apparatus function is in principle known from physical modeling and can be assumed to be smooth in both arguments $\omega$ and $\Omega$.
For a detailed description of the kernel see \cite{Ge11}.
We also write $\hat E_{\rm SD}$ in polar coordinates as
\begin{equation}
 \hat E_{\rm SD}(\omega)=\sqrt{S_{\rm SD}(\omega)} \expfct{i\phi_{\rm SD}(\omega)}.
\end{equation}
Perfect knowledge of all efficiency calibration factors in $K$ provided, one can directly use the spectral power density $S_{\rm SD}$ for reconstruction of $E(t)$. Even a relatively small miscalibration, however, may have a dramatic influence on $S_{\rm SD}$, 
which is why algorithms relying on the spectral power density may have trouble to produce reasonable results \cite{Ge11}.
The phase $\phi_{\rm SD}$, on the other hand, remains widely unaffected from such amplitude calibration problems.
The measurement setup for the SD-interferogram is shown in Figure \ref{fig:physics}. For physical details we refer to \cite{Ge11,GHBKS13,KokBirBetGreSte10}.

  \begin{figure}[htb]
    \begin{center}
      \includegraphics[width=0.8\textwidth]{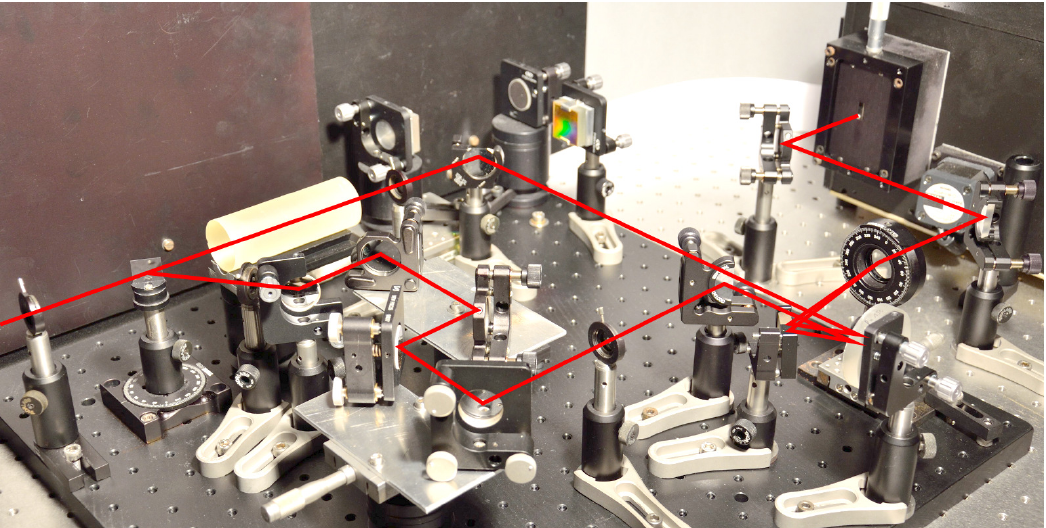}
      \caption{Measurement setup in self-diffraction spectral interferometry.} \label{fig:physics}
    \end{center}
  \end{figure}

The measurements for the spectrum of $\hat E$ indicate that the corresponding function can be neglected outside a compact interval $[\omlow, \omup]$, i.e.~we assume that
\begin{equation}\label{supp1}
\text{supp } \hat E\subset [\omlow,\omup].
\end{equation}
 Consequently, $\hat E_{\rm SD}$ can also be neglected outside the compact interval
\begin{equation}\label{supp2}
 \text{supp }\hat E_{\rm SD}\subset [2\omlow-\omcw,2\omup-\omcw].
\end{equation}
Thus, (\ref{eq1}) can be written as
\begin{equation}\label{eq2}
\hat E_{\rm SD}(\omega)=\int\limits_{\max\{\omlow,\omega+\omcw-\omup\}}^{\min\{\omup,\omega+\omcw-\omlow\}}K(\omega,\Omega)\hat E(\omega+\omcw-\Omega)\hat E(\Omega)\mathrm d \Omega.
\end{equation}
The substitutions
\begin{align*}
  g(\sY):=& \hat E_{\rm SD}(-\omcw+2\omlow+\sY(\omup-\omlow)),\\
  f(\tX):=& \hat E(\omlow+\tX(\omup-\omlow)),\\
  k(\sY,\tX):=&K(-\omcw+2\omlow+ \sY(\omup-\omlow),\omlow+\tX(\omup-\omlow))
\end{align*}
 reformulate (\ref{eq2}) as 
\begin{equation}\label{eq3}
 \int\limits_{\max\{0,\sY-1\}}^{\min\{1,\sY\}}k(\sY,\tX)\,f(\sY-\tX)\,f(\tX)\mathrm d\sY = g(\sY)\,,\qquad 0 \le \sY \le 2,
\end{equation}
which is precisely the mathematical model presented in the introduction. The physical background justifies the continuity of the kernel $k$ in $\suppk$ as required in (\ref{eq:kernel}).

\section{Some properties of complex autoconvolution operator and deautoconvolution problem}\label{s3}

In this section, we summarize properties of the complex autoconvolution operator and discuss the classical Tikhonov regularization for the deautoconvolution problem aimed at solving the operator equation (\ref{eq:opeq})
based on noisy data $g^\delta$ under the noise model (\ref{eq:noise}).

  The Fr\'echet differentiability of the autoconvolution operator mapping in the real Hilbert space $L^2(0,1)$ and the structure of its Fr\'echet derivative were outlined in \cite{gh94}. Under the kernel assumption (\ref{eq:kernel}) imposed on the kernel $k$, such result can also be formulated for the complex case (\ref{eq:F}) with the Fr\'echet derivative \linebreak
  $F'(f): X=L^2_{\IC}(0,1) \to Y=L^2_{\IC}(0,2)$ given for all $f \in X$ by the formula
  \begin{equation} \label{eq:Frech}
  [F'(f)h](\sY)  = 2 \int \limits _{\max(\sY-1,0)}^{\min(\sY,1)} k(\sY, \tX)\, f(\sY-\tX)\, h(\tX)\, d\tX, \qquad 0 \le \sY \le 2, \quad h \in X,
  \end{equation}
  from which we easily derive
  \begin{equation} \label{eq:equa}
[F(f+h) - F(f) - F'(f)\,h] (\sY) =  [F(h)](\sY) , \qquad 0 \leq \sY \leq 2,\quad h \in X.
\end{equation}
  Hence, we obtain with $\suppk$ from (\ref{eq:suppk}) and using
  \begin{equation} \label{eq:upk}
    \bar k:= \max \limits_{(\sY,\tX) \in \suppk} |k(\sY,\tX)|,
  \end{equation}
  the norm equation
 $$  \|F(f+h) - F(f) - F'(f)\,h\|_Y=\|F(h)\|_Y \quad \mbox{for all}\;\;f,h \in X$$
  and the nonlinearity condition
    \begin{equation}\label{eq:Lipschitz}
  \|F(f+h) - F(f) - F'(f)\,h\|_Y \le \bar k \,\|h\|_X^2 \quad \mbox{for all}\;\;f,h \in X.
  \end{equation}
    Such condition, which was already used in~\cite[Section~10.2]{EHN96} to obtain convergence rates for Tikhonov regularization of nonlinear operator equations, is by now the only available nonlinearity condition for the operator $F$ from (\ref{eq:F}).

  Note that the adjoint operator $F'(f)^*: Y \to X$ can also be written down explicitly, namely as
  \begin{equation} \label{eq:Frech*}
    [F'(f)^*r](\tX)  = 2 \int \limits_\tX^{\tX+1} k(\sY,\tX)\, f(\sY-\tX)\, r(\sY)\, d\sY, \qquad 0 \le \tX \le 1, \quad f \in X,\; r \in Y.
  \end{equation}
  As usual we apply in the sequel the symbol $\mathcal{B}(f,r)$ for the closed ball in $X$ with center $f$ and radius $r>0$ and the symbols $\rightharpoonup$  and $\to$ for weak and norm convergence, respectively, in the occurring Hilbert spaces. The proofs of the following two results have been postponed to the Appendix. 

\begin{proposition} \label{pro:weaklycont}
 Suppose that the kernel $k$ satisfies (\ref{eq:kernel}). Then the autoconvolution operator $F:X \to Y$ from (\ref{eq:F}) is weakly continuous, i.e., for every sequence $\{f_n\}_{n \in \N} \in X$ with $f_n \rightharpoonup f_0$ in $X$ as $n \to \infty$ we have $F(f_n) \rightharpoonup F(f_0)$ in $Y$.
\end{proposition}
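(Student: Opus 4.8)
The plan is to reduce the weak continuity of the quadratic map $F$ to the weak-to-strong continuity of a suitable \emph{linear} compact operator: after pairing $F$ with a fixed test function in $Y$ and performing a change of variables, the resulting expression becomes a bilinear form on $X\times X$ whose associated integral operator turns out to be Hilbert-Schmidt, and it is this compactness — a manifestation of the smoothing effect of the convolution — that carries the argument. Concretely, let $f_n\rightharpoonup f_0$ in $X$. Since weakly convergent sequences are norm bounded, $\|f_n\|_X\le C$ for some $C>0$ (and, by (\ref{eq:Lipschitz}) applied with $f=0$, also $\|F(f_n)\|_Y\le\bar k\,C^2$, so the images are bounded as well). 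To prove $F(f_n)\rightharpoonup F(f_0)$ in $Y$ it is then enough to show
$$\langle F(f_n),r\rangle_Y\longrightarrow\langle F(f_0),r\rangle_Y\qquad\text{for every }r\in Y.$$

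Fix $r\in Y$. Starting from (\ref{eq:F}), an application of Fubini's theorem — legitimate because $|k|\le\bar k$ on $\suppk$ and $\iint_{\suppk}|r(\sY)|\,|f_n(\sY-\tX)|\,|f_n(\tX)|\,d\tX\,d\sY\le\|r\|_{L^1(0,2)}\,\|f_n\|_X^2<\infty$ — together with the substitution $(\sY,\tX)=(u+v,v)$, which has unit Jacobian and maps $[0,1]^2$ bijectively onto $\suppk$, yields
$$\langle F(f_n),r\rangle_Y=\int_0^1 \int_0^1 \ell_r(u,v)\,f_n(u)\,f_n(v)\,du\,dv,\qquad\ell_r(u,v):=\overline{r(u+v)}\,k(u+v,v).$$
Since $|k|\le\bar k$ on $\suppk$ and, by the same substitution, $\iint_{[0,1]^2}|r(u+v)|^2\,du\,dv\le\|r\|_Y^2$, the kernel $\ell_r$ belongs to $L^2([0,1]^2)$; hence the integral operator $T_r\colon X\to X$ defined by $(T_rf)(u):=\int_0^1\ell_r(u,v)\,f(v)\,dv$ is Hilbert-Schmidt and therefore compact, and $\langle F(f_n),r\rangle_Y=\int_0^1(T_rf_n)(u)\,f_n(u)\,du$.

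Now split
$$\langle F(f_n),r\rangle_Y-\langle F(f_0),r\rangle_Y=\int_0^1\big(T_rf_n-T_rf_0\big)(u)\,f_n(u)\,du+\int_0^1(T_rf_0)(u)\,\big(f_n-f_0\big)(u)\,du.$$
For the first term, compactness of $T_r$ and $f_n\rightharpoonup f_0$ give $T_rf_n\to T_rf_0$ in $X$, so by the Cauchy-Schwarz inequality its modulus is at most $\|T_rf_n-T_rf_0\|_X\,\|f_n\|_X\le C\,\|T_rf_n-T_rf_0\|_X\to0$. The second term is the value at $f_n-f_0$ of the fixed bounded linear functional $g\mapsto\int_0^1(T_rf_0)(u)\,g(u)\,du$ on $X$, hence it tends to $0$ because $f_n\rightharpoonup f_0$. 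Consequently $\langle F(f_n),r\rangle_Y\to\langle F(f_0),r\rangle_Y$ for every $r\in Y$, which is the claimed weak continuity.

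I expect the load-bearing step to be the identification in the second paragraph: recognising that, under the substitution $\sY=u+v$, pairing the quadratic operator against a fixed $r$ produces a genuine bilinear form on $X\times X$ represented by an $L^2$-kernel, so that the associated operator $T_r$ is compact. That compactness is exactly what compensates for the fact that a quadratic nonlinearity does not in general survive weak convergence; the remaining ingredients (the a priori bound on $\|f_n\|_X$, the Fubini justification, the splitting, and reading the last integral as a fixed functional evaluated at $f_n-f_0$) are routine. A minor point to keep track of is the placement of complex conjugates coming from the inner product convention on $Y$, but this does not affect the structure of the argument, since complex conjugation preserves weak convergence in $X$.
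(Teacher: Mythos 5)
Your proof is correct. It shares the paper's opening skeleton -- pair $F(f_n)$ with a fixed test function, apply Fubini, and reduce everything to a linear integral operator acting on $f_n-f_0$ -- but the two load-bearing steps are genuinely different. First, the paper splits the quadratic difference symmetrically as $\big(f_n(s-\tau)-f_0(s-\tau)\big)\big(f_n(\tau)+f_0(\tau)\big)$ under the integral, which is only valid thanks to the standing symmetry assumption $k(s,\tau)=k(s,s-\tau)$ from (\ref{eq:kernel}); your telescoping decomposition $B(f_n,f_n)-B(f_0,f_0)=B(f_n,f_n-f_0)+B(f_n-f_0,f_0)$ needs no symmetry at all. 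Second, the paper proves that its auxiliary functions $\Xi_n(\tau)=\langle\Theta_\tau,\Delta_n\rangle_X$ tend to zero in $L^2(0,1)$ via an Arzel\`a--Ascoli argument (equicontinuity, which uses the uniform continuity of $k$ on the compact set $\suppk$, plus pointwise convergence, hence uniform convergence), a step it only sketches; you instead observe that the operator $T_r$ is Hilbert--Schmidt because $\ell_r\in L^2([0,1]^2)$, and invoke the complete continuity of compact operators to get $T_r(f_n-f_0)\to 0$ in norm. Your route is shorter, fills in the step the paper leaves as ``by some calculations,'' and in fact requires only $k\in L^\infty(\suppk)$ rather than continuity; the paper's version yields the slightly stronger byproduct of uniform convergence of $\Xi_n$ on $[0,1]$ and keeps the symmetric factorization that is natural given its normalization of the kernel. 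All the auxiliary details in your write-up (the Fubini justification via Young's inequality, the unit-Jacobian change of variables onto $\suppk$, the $L^2$ bound $\|\ell_r\|_{L^2([0,1]^2)}\le\bar k\,\|r\|_Y$, and the treatment of the bilinear rather than sesquilinear pairing) check out.
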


  The following proposition outlines a very specific property of the autoconvolution operator, namely that the nonlinear $F: L^2_{\IC}(0,1) \to L^2_{\IC}(0,2)$ from (\ref{eq:F}) is non-compact, whereas the linear Fr\'echet derivative operator $F^\prime(f)$ is compact in all points $f \in L^2_{\IC}(0,1)$. This is a remarkable property of the autoconvolution operator, which rarely occurs for nonlinear operators. Conversely, it is well-known that  the Fr\'echet derivative of a compact nonlinear operator is always compact. Again the proof can be found in the Appendix. 

\begin{proposition} \label{pro:noncompact}
 Suppose that the kernel $k \not \equiv 0$ satisfies (\ref{eq:kernel}).
 Then the kernel-based nonlinear autoconvolution operator $F: X \to Y$ from (\ref{eq:F}) is not compact.
 More precisely, there exists a sequence $\{h_n\}_{n \in \N} \subset \mathcal{B}(0,1)$ such that $h_n \rightharpoonup 0$ but $h_n \not \to 0$ in $X$ as $n \to \infty$, which implies that we have, for all $f \in X$  and all $r>0$,  a sequence $\{f_n:=f+r\,h_n\}_{n \in \N} \subset \mathcal{B}(f,r)$ with $f_n \rightharpoonup f$  in $X$, $F(f_n) \rightharpoonup F(f)$  in $Y$ but $F(f_n) \not \to F(f) $ in $Y$ as $n \to \infty$.
 On the other hand, the Fr\'echet derivative $F^\prime(f)$ given by (\ref{eq:Frech}) is a compact linear operator for all $f \in X$.
 \end{proposition}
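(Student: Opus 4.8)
The plan is to prove the two assertions separately, the statement about $F'(f)$ being the quicker one. For the compactness of $F'(f)$ I would read off from (\ref{eq:Frech}) that $F'(f)$ is the linear integral operator $h\mapsto\int_0^1\mathcal{K}_f(\cdot,\tX)\,h(\tX)\de\tX$ with kernel $\mathcal{K}_f(\sY,\tX):=2\,k(\sY,\tX)\,f(\sY-\tX)$ on $\suppk$ and $\mathcal{K}_f\equiv 0$ elsewhere. Using $\bar k$ from (\ref{eq:upk}) to bound $|k|$ and Fubini's theorem --- integrating first in $\sY$ over the slice $\{\sY:\tX\le\sY\le\tX+1\}$ of the parallelogram $\suppk$ from (\ref{eq:suppk}), where the inner integral of $|f(\sY-\tX)|^2$ equals $\|f\|_X^2$ --- one obtains $\|\mathcal{K}_f\|_{L^2((0,2)\times(0,1))}^2\le 4\,\bar k^2\,\|f\|_X^2<\infty$. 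Hence $F'(f)$ is Hilbert--Schmidt, hence compact, for every $f\in X$.

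The non-compactness of $F$ rests on the observation that modulating a fixed function by a plane wave commutes with $F$ up to the same plane wave. Concretely, for a fixed $g\in X$ put $h_n(\tX):=\expfct{i n\tX}\,g(\tX)$. In (\ref{eq:F}) the factors $\expfct{i n(\sY-\tX)}$ and $\expfct{i n\tX}$ multiply to $\expfct{i n\sY}$, which does not depend on the integration variable, so that $[F(h_n)](\sY)=\expfct{i n\sY}\,[F(g)](\sY)$ and hence $\|F(h_n)\|_Y=\|F(g)\|_Y$ for every $n$. At the same time $\|h_n\|_X=\|g\|_X$ because $|\expfct{i n\tX}|=1$, and $h_n\rightharpoonup 0$ in $X$ since $\langle h_n,\psi\rangle_X=\int_0^1\expfct{i n\tX}\,g(\tX)\,\overline{\psi(\tX)}\de\tX\to 0$ for every $\psi\in X$ by the Riemann--Lebesgue lemma (the integrand lies in $L^1(0,1)$). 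Thus everything reduces to exhibiting one $g$ with $\|g\|_X\le 1$ and $F(g)\ne 0$.

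Such a $g$ exists because $k\not\equiv 0$. By continuity of $k$ on $\suppk$ there is an interior point $(\sY_0,\tX_0)$ of $\suppk$ with $k(\sY_0,\tX_0)\ne 0$, so $\tX_0\in(0,1)$ and $\sY_0-\tX_0\in(0,1)$. Let $g$ be a continuous nonnegative bump supported in a small neighbourhood of $\{\tX_0,\sY_0-\tX_0\}$ inside $(0,1)$, normalised so that $\|g\|_X\le 1$. For $\sY=\sY_0$ and $\tX$ in the support, the symmetry $k(\sY_0,\tX)=k(\sY_0,\sY_0-\tX)$ from (\ref{eq:kernel}) keeps $k(\sY_0,\tX)$ close to $k(\sY_0,\tX_0)$ --- so the two pieces of the support do not cancel --- while $g(\sY_0-\tX)\,g(\tX)\ge 0$; hence for a sufficiently narrow bump the real or the imaginary part of $[F(g)](\sY_0)=\int k(\sY_0,\tX)\,g(\sY_0-\tX)\,g(\tX)\de\tX$ is nonzero, and therefore $F(g)\ne 0$. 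With this $g$, the sequence $h_n:=\expfct{i n\cdot}g$ lies in $\mathcal{B}(0,1)$, satisfies $h_n\rightharpoonup 0$ but $h_n\not\to 0$ in $X$ (its norm is the positive constant $\|g\|_X$), and obeys $\|F(h_n)\|_Y=\|F(g)\|_Y>0$ while $F(h_n)\rightharpoonup 0$ (by Proposition~\ref{pro:weaklycont}, or directly by Riemann--Lebesgue applied to $\expfct{i n\cdot}F(g)$); consequently no subsequence of $\{F(h_n)\}$ converges in norm, $F(\mathcal{B}(0,1))$ fails to be relatively compact, and $F$ is not compact.

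Finally, to pass to an arbitrary $f\in X$ and $r>0$, set $f_n:=f+r\,h_n$. Then $f_n\in\mathcal{B}(f,r)$, $f_n\rightharpoonup f$, and $F(f_n)\rightharpoonup F(f)$ by Proposition~\ref{pro:weaklycont}. The exact identity (\ref{eq:equa}) yields $F(f_n)-F(f)=r\,F'(f)h_n+r^2\,F(h_n)$; here $\|F'(f)h_n\|_Y\to 0$ because $F'(f)$ is compact and $h_n\rightharpoonup 0$, whereas $\|r^2F(h_n)\|_Y=r^2\|F(g)\|_Y$ is a fixed positive number, so by the reverse triangle inequality $\liminf_{n\to\infty}\|F(f_n)-F(f)\|_Y\ge r^2\|F(g)\|_Y>0$ and $F(f_n)\not\to F(f)$. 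I expect the only non-routine point to be the middle step --- certifying $F(g)\ne 0$ for some admissible $g$, i.e.\ that the symmetrised kernel genuinely contributes; the compactness of $F'(f)$ is just the Hilbert--Schmidt criterion, and the remainder is bookkeeping with the plane-wave identity and (\ref{eq:equa}).
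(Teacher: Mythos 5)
Your argument is correct, and the non-compactness part takes a genuinely different route from the paper. The paper builds, for a general kernel, a sequence of \emph{locally supported chirped} functions $h_n$ living on two small intervals $[\tau_1,\tau_1+\varepsilon]$ and $[s_1-\tau_1,s_1-\tau_1+\varepsilon]$ with phases matched so that the product $h_n(s-\tau)h_n(\tau)$ is constant in $\tau$ near $s=s_1$, and then bounds $\liminf_n\|F(h_n)\|_Y$ from below via the estimate corresponding to your kernel-perturbation inequality. You instead modulate a single fixed bump $g$ by a global plane wave, $h_n=\expfct{in\cdot}g$, and exploit the exact intertwining identity $F(\expfct{in\cdot}g)=\expfct{in\cdot}F(g)$, which makes $\|F(h_n)\|_Y$ literally constant in $n$ and reduces the whole construction to the single static fact $F(g)\neq 0$; that fact you then certify with essentially the same continuity-plus-symmetry estimate the paper uses in its inequality (\ref{eq:lowint}). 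Your version is arguably more transparent and isolates cleanly where the hypothesis $k\not\equiv 0$ enters; the paper's localized construction is what one must fall back on if no such global modulation symmetry is available. The remaining pieces --- weak convergence via Riemann--Lebesgue, the Hilbert--Schmidt bound $\|\mathcal{K}_f\|_{L^2}^2\le 4\bar k^2\|f\|_X^2$ for $F'(f)$, and the decomposition $F(f_n)-F(f)=r\,F'(f)h_n+r^2F(h_n)$ from (\ref{eq:equa}) combined with complete continuity of $F'(f)$ --- coincide with the paper's. One small step you should make explicit: knowing only that $[F(g)](\sY_0)\neq 0$ at a single point does not by itself give $\|F(g)\|_Y>0$, since a single point is a null set; you need to observe either that $\sY\mapsto[F(g)](\sY)$ is continuous (which follows from the uniform continuity of $k$ on the compact set $\suppk$ and the $L^2$-continuity of translation applied to $g$), or, equivalently, that your lower bound for the modulus of the integral persists for all $\sY$ in a small neighborhood of $\sY_0$. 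This is a one-line fix, not a flaw in the approach.
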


\medskip

\begin{remark} \label{rem:ill-posed}
  The proof of Proposition \ref{pro:noncompact} is based on finding sequences $f_n \rightharpoonup f$ in $X$ such that $F(f_n) \not \to F(f)$ in $Y$ as $n \to \infty$.
  A conversely related concept  is the 
  local ill-posedness of the operator equation (\ref{eq:opeq}) at $f \in X$, which requires sequences $f_n$ in any neighborhood $\mathcal{B}(f,r)$ of $f$ such that $f_n \not \to f$, but $F(f_n) \to F(f)$. For the specific case $k\equiv 1$ on $\suppk$, an example of the form
    \[ f_n = f + r\,h_n, \qquad \mbox{with } h_n(\tX) = \expfct{i\,n^2\,\tX^2}, \]
  was provided in \cite[Example 3.2]{BuerHof15}, showing that the  operator equation (\ref{eq:opeq}) is locally ill-posed everywhere in this situation. It is worth noting that, while the freedom in choosing $r>0$ is exploited here for local arguments with arbitrarily small $r$, it may also be used for construction of elements $f_n$ at arbitrarily large distances, \mbox{$\|f-f_n\|_X = r$}, whose images $F(f)$ and $F(f_n)$ are virtually indistinguishable, say \mbox{$\| F(f)-F(f_n)\|_Y \leq \frac 1n$}. This means that highly oscillating perturbations imposed on $f$ cannot be recovered by a simple least-squares approach from noisy data $g^\delta$ of $g=F(f)$ with noise model (\ref{eq:noise}), even if the noise level $\delta>0$ is arbitrarily small. Since the same phenomenon has to be expected for more general kernels $k$, a regularization method seems to be always required in order to avoid oscillating numerical approximations.
\end{remark}

  These observations are particularly interesting in combination with the uniqueness assertion in Proposition~\ref{prop:unique} below which is based on the following well-known Titchmarsh convolution theorem; cf.~\cite{Tit26}.

 \begin{lemma} \label{lem:Titch}
Let $\xi_1, \xi_2 \in L^1_{\IC}(\R)$ with $\supp(\xi_l) \subset [0,\infty),\; l=1,2$, and let for some constant $a>0$
$$ \int \limits_0^\sY \xi_1(\sY-\tX)\,\xi_2(\tX)\,d\tX =0 \quad \mbox{for almost all} \quad \sY \in [0,a].$$
Then there are nonnegative constants $a_1$ and $a_2$ such that $a_1+a_2 \ge a$ and
$$\xi_1(\tX)=0 \;\; \mbox{for almost all} \;\; \tX \in [0,a_1]\;\; \mbox{and} \;\; \xi_2(\tX)=0  \;\; \mbox{for almost all} \;\; \tX \in [0,a_2]. $$
 \end{lemma}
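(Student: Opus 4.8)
The assertion is the classical Titchmarsh convolution theorem, so strictly speaking one may simply quote \cite{Tit26}; for completeness I outline how a proof is organised. The plan is to reformulate the lemma as an identity for left endpoints of supports: for $\xi\in L^1_{\IC}(\R)$ with $\supp(\xi)\subseteq[0,\infty)$ put
$$\ell(\xi):=\sup\{\,t\ge 0 \st \xi=0\ \mbox{a.e.\ on}\ [0,t]\,\}\in[0,\infty].$$
Then Lemma~\ref{lem:Titch} is equivalent to $\ell(\xi_1*\xi_2)=\ell(\xi_1)+\ell(\xi_2)$, read with $\infty+c=\infty$: if this holds, then $\xi_1*\xi_2=0$ a.e.\ on $[0,a]$ forces $\ell(\xi_1)+\ell(\xi_2)\ge a$, and from the last inequality one extracts by an elementary case distinction nonnegative constants $a_1\le\ell(\xi_1)$, $a_2\le\ell(\xi_2)$ with $a_1+a_2\ge a$, which give the conclusion since a single point is Lebesgue-null. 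The inequality ``$\ge$'' in the identity is elementary: in $\int_0^{\sY}\xi_1(\sY-\tX)\,\xi_2(\tX)\,d\tX$ the integrand vanishes a.e.\ unless $\tX\ge\ell(\xi_2)$ and $\sY-\tX\ge\ell(\xi_1)$, so $\xi_1*\xi_2=0$ a.e.\ on $[0,\ell(\xi_1)+\ell(\xi_2))$. As the case $\ell(\xi_1)=\infty$ or $\ell(\xi_2)=\infty$ (one factor a.e.\ zero) is trivial, I assume both finite from now on.

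The substance is the reverse inequality $\ell(\xi_1*\xi_2)\le\ell(\xi_1)+\ell(\xi_2)$, which I would prove by contradiction, assuming $\ell(\xi_1*\xi_2)>\ell(\xi_1)+\ell(\xi_2)$. First I would normalise: replacing $\xi_l(\cdot)$ by $\xi_l(\cdot+\ell(\xi_l))$ makes $\ell(\xi_l)=0$ while keeping $\xi_l\in L^1$ supported in $[0,\infty)$; truncating each factor to a bounded interval and rescaling then reduces to $\xi_1,\xi_2\in L^1_{\IC}(0,1)$ with $\ell(\xi_1)=\ell(\xi_2)=0$ but $\xi_1*\xi_2=0$ a.e.\ on $[0,1)$, so $\supp(\xi_1*\xi_2)\subseteq[1,2]$. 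Next I would pass to the Laplace transforms $\Phi_l(z):=\int_0^1\xi_l(\tX)\,\expfct{z\tX}\,d\tX$, $l=1,2$: these are entire of exponential type, with $|\Phi_l(z)|\le\|\xi_l\|_{L^1}\,\expfct{\max(\RT z,0)}$ and hence bounded on the closed half-plane $\{\RT z\le 0\}$. Their product $\Theta:=\Phi_1\Phi_2$ is the Laplace transform of $\xi_1*\xi_2$; since $\supp(\xi_1*\xi_2)\subseteq[1,2]$ one gets $|\Theta(z)|\le\|\xi_1*\xi_2\|_{L^1}\,\expfct{\RT z}$ for $\RT z\le 0$, so $\Theta$ decays exponentially along the negative real axis, and moreover $\Theta\not\equiv 0$ (otherwise some $\Phi_l$, hence some $\xi_l$, would vanish, contrary to $\ell(\xi_l)=0$). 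The goal is then to contradict this decay.

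The heart of the matter, and the step I expect to be the main obstacle, is to turn $\ell(\xi_l)=0$ against the exponential decay of $\Theta$. The classical input is P\'olya's theorem describing the indicator $h_{\Phi_l}(\theta):=\limsup_{r\to\infty}r^{-1}\log|\Phi_l(r\expfct{i\theta})|$ through the convex hull of $\supp(\xi_l)$; along the negative real direction it reads $h_{\Phi_l}(\pi)=-\inf\supp(\xi_l)=-\ell(\xi_l)=0$, so $\Phi_l$ does \emph{not} decay exponentially along $(-\infty,0)$, whereas the bound above forces $h_\Theta(\pi)\le-1$. To reconcile these through $\Theta=\Phi_1\Phi_2$ one would need $h_{\Phi_1\Phi_2}(\pi)\ge h_{\Phi_1}(\pi)+h_{\Phi_2}(\pi)$, i.e.\ the \emph{super}-additive inequality for the indicator of a product in this direction; this fails in general and has to be bought by a genuine regular-growth argument --- a Phragm\'en--Lindel\"of / Ahlfors--Heins analysis of $\Phi_l$ on the half-plane where it is bounded, or the theory of functions of completely regular growth. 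What this machinery ultimately rules out is exactly the scenario in which rapid oscillation of $\xi_l$ near the origin produces enough cancellation in $\Phi_l$ along the negative real axis to feign a larger left endpoint for $\xi_1*\xi_2$. As executing this last step in full is lengthy, it is most economical to cite it from the theory of entire functions of exponential type, just as the lemma itself is cited from \cite{Tit26}.
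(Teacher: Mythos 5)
The paper gives no proof of this lemma at all: it is invoked as the classical Titchmarsh convolution theorem with the bare reference \cite{Tit26}, which is exactly your opening suggestion, so your proposal is consistent with the paper's treatment. Your additional outline of the complex-analytic route is sound --- the reformulation via $\ell(\xi_1*\xi_2)=\ell(\xi_1)+\ell(\xi_2)$, the elementary ``$\ge$'' direction, the reduction by translation and truncation to factors in $L^1_{\IC}(0,1)$ with $\ell=0$, and the passage to the entire functions $\Phi_l$ bounded on the left half-plane are all correct, and you rightly identify the crux as the lower bound $h_{\Phi_1\Phi_2}(\pi)\ge h_{\Phi_1}(\pi)+h_{\Phi_2}(\pi)$, which is not a formal property of indicators but requires an Ahlfors--Heins / completely-regular-growth argument. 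Be aware, though, that this deferred step is essentially the whole theorem, so the outline is a roadmap rather than an independent proof; since the lemma is used in the paper only as a black box (in Proposition~\ref{prop:unique}), citing \cite{Tit26} is the appropriate level of detail here.
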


\begin{proposition} \label{prop:unique}
 Let the kernel $k$ from (\ref{eq:kernel}) be generated by a function $\kappa \in C[0,1]$ with $\supp(\kappa) = [0,1]$ such that (\ref{eq:simplekernel}) is satisfied.
 If for given $g \in Y = L_{\IC}^2(0,2)$ the function $\fdag \in X = L_{\IC}^2 (0,1)$ solves (\ref{eq:opeq}) then $\fdag$ and $-\fdag$ are the only solutions of this equation for the right-hand side $g$.
 \end{proposition}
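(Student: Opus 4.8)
The plan is to reduce the uniqueness statement to the Titchmarsh convolution theorem (Lemma~\ref{lem:Titch}) by peeling off the known, everywhere-nonvanishing kernel factor $\kappa$. Suppose $\fdag$ and $\tilde f$ both solve $F(f)=g$ for the same $g$, where $k(\sY,\tX)=\kappa(\tX)\kappa(\sY-\tX)$ on $\suppk$. Then for almost all $\sY\in[0,2]$,
\begin{equation*}
\int\limits_{\max(\sY-1,0)}^{\min(\sY,1)} \kappa(\tX)\kappa(\sY-\tX)\bigl[\fdag(\sY-\tX)\fdag(\tX)-\tilde f(\sY-\tX)\tilde f(\tX)\bigr]\,d\tX = 0 .
\end{equation*}
Introducing $F_1:=\kappa\,\fdag$ and $F_2:=\kappa\,\tilde f$, both in $L^2_\IC(0,1)\subset L^1_\IC(\R)$ with support in $[0,1]$, the identity becomes $\int_0^\sY \bigl(F_1(\sY-\tX)F_1(\tX)-F_2(\sY-\tX)F_2(\tX)\bigr)\,d\tX = 0$ for a.a.\ $\sY\in[0,2]$; i.e.\ $F_1*F_1 = F_2*F_2$ on $[0,2]$, equivalently the convolution of $\xi:=F_1-F_2$ with $\zeta:=F_1+F_2$ vanishes on $[0,2]$. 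Applying Lemma~\ref{lem:Titch} with $a=2$ yields nonnegative $a_1,a_2$ with $a_1+a_2\ge 2$ such that $\xi=0$ a.e.\ on $[0,a_1]$ and $\zeta=0$ a.e.\ on $[0,a_2]$.

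The key step is then to exploit that $F_1,F_2$ are both supported in $[0,1]$, so $\xi$ and $\zeta$ are supported in $[0,1]$ as well. If $a_1<1$ and $a_2<1$ were both strictly less than $1$, then $a_1+a_2<2$, contradicting $a_1+a_2\ge 2$; the only remaining possibility (since neither $a_j$ can exceed the length of the support, but actually the $a_j$ from Titchmarsh are only constrained by $a_1+a_2\ge 2$, so one could have, say, $a_1\ge 1$) must be handled carefully. The clean way: from $a_1+a_2\ge 2$ and the fact that $\xi$ is supported in $[0,1]$, if $a_1\ge 1$ then $\xi\equiv 0$, i.e.\ $F_1=F_2$, hence $\tilde f=\fdag$ (dividing by $\kappa$, which is licit because $\supp(\kappa)=[0,1]$ forces $\kappa$ to be nonzero on a dense set; more precisely, $\kappa\tilde f=\kappa\fdag$ a.e.\ and $\kappa$ continuous with full support gives $\tilde f=\fdag$ a.e.\ on $[0,1]$). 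Symmetrically, if $a_2\ge 1$ then $\zeta\equiv 0$, i.e.\ $F_2=-F_1$, giving $\tilde f=-\fdag$. Since $a_1+a_2\ge 2$ forces $\max(a_1,a_2)\ge 1$, one of these two cases always holds, and we obtain $\tilde f\in\{\fdag,-\fdag\}$.

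The main obstacle I anticipate is the careful handling of the division by $\kappa$: the hypothesis $\supp(\kappa)=[0,1]$ (not merely $\subset[0,1]$) is exactly what is needed, but one must argue that $\kappa\tilde f = \kappa\fdag$ a.e.\ on $[0,1]$ together with continuity of $\kappa$ and $\{\kappa\ne 0\}$ dense in $[0,1]$ implies $\tilde f=\fdag$ a.e.; this is because on the open dense set $\{\kappa\ne 0\}$ we may divide pointwise, and a set of full measure intersected with a dense open set of full measure still has full measure in $[0,1]$ (here one should note $\{\kappa=0\}$ has measure zero, since $\supp(\kappa)=[0,1]$ means $\{\kappa\ne0\}$ is dense, but density alone need not give full measure — however for the argument one only needs $\kappa\ne0$ a.e., which does follow if additionally $\kappa$ has no interior zeros; in fact the statement as given should be read with $\kappa$ nonzero a.e., and this is the point to state explicitly). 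A secondary, more cosmetic point is verifying the integrability hypothesis of Lemma~\ref{lem:Titch}: $F_j\in L^2(0,1)$ with bounded support lies in $L^1(\R)$ by Cauchy--Schwarz, and $\xi,\zeta$ are then in $L^1_\IC(\R)$ with support in $[0,\infty)$, so the lemma applies verbatim.
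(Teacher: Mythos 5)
Your argument is correct and is essentially the paper's own proof made explicit: the paper defers to Theorem~4.2 of \cite{GHBKS13}, whose content is exactly the Titchmarsh factorization you carry out after absorbing $\kappa$ into the unknowns (writing $F_1\ast F_1-F_2\ast F_2=(F_1-F_2)\ast(F_1+F_2)$, applying Lemma~\ref{lem:Titch} with $a=2$, and using that both factors are supported in $[0,1]$ to force one of them to vanish identically), followed by division by $\kappa$. Your caveat about that last step is well taken: for continuous $\kappa$ the condition $\supp(\kappa)=[0,1]$ only says that $\{\kappa\neq 0\}$ is dense, not that it has full measure (a zero set equal to a fat Cantor set is a counterexample), whereas the division --- and indeed the literal truth of the proposition --- requires $\kappa\neq 0$ almost everywhere; the paper's proof asserts this equivalence without justification, so the hypothesis should indeed be read the way you propose.
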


 \begin{proof}
Based on Lemma~\ref{lem:Titch}, the assertion of this proposition can be shown in analogy to the proof of Theorem~4.2 in \cite{GHBKS13}, taking into account that under the stated assumptions on $\kappa$ the equality
$\kappa(\tX)\,\fdag(\tX)=\kappa(\tX)\,\tilde f(\tX)$ with $\tilde f \in X$  and for almost all $\tX \in [0,1]$ implies that $\fdag$ and $\tilde f$ are the same elements of $L^2_{\IC}(0,1)$ when taking into account that
    \[ \supp(\kappa) = [0,1] \quad \mbox{holds\, if and only if} \quad \kappa \neq 0 \mbox{ almost everywhere on } [0,1]. \]
 \end{proof}

Note that Lemma~\ref{lem:Titch} under the assumption (\ref{eq:kernel}) also provides us with a necessary and sufficient condition for the injectivity of the Fr\'echet derivative; see~(\ref{eq:Frech}).
Namely, the bounded linear operator $F^\prime(f): X \to Y$ is non-injective if either $f$ is the zero function almost everywhere on $[0,1]$ or if $\kappa \equiv 0$. Otherwise the Fr\'echet derivative is injective.

\bigskip

Variational (cf.~\cite{Scherzetal09}) and iterative (cf.~\cite{KNS08}) regularization methods form two classes of standard methods for stabilizing ill-posed nonlinear operator equations (\ref{eq:opeq}) in Hilbert spaces.
The most prominent representative for the first class is the Tikhonov regularization, where in the simplest form (cf., e.g., \cite[Chapt.10]{EHN96}) regularized solutions $\fad$ are minimizers of the extremal problem
\begin{equation} \label{eq:EHNTik}
    \|F(f)-g^\delta\|_Y^2+ \alpha\,\|f-\bar f\|_X^2 \to \min,\quad
    \mbox{subject to} \quad f \in X,
\end{equation}
with some initial guess $\bar f \in X$.
For obtaining convergence rates of the regularized solutions, an appropriate interplay of solution smoothness and structural conditions expressing the nonlinearity of $F$ in a neighborhood of the solution is required (see, e.g., \cite{Hof15} for an overview).
For the Tikhonov regularization of the form (\ref{eq:EHNTik}) and  $F$ from (\ref{eq:F}), the condition (\ref{eq:Lipschitz}) acts as nonlinearity condition sufficiently well and allows proving the convergence rate
$$\|f^\delta_{\alpha(\delta)}-\fdag\|_X=\mathcal{O}\left(\sqrt{\delta} \right) \quad \mbox{as} \quad \delta \to 0 \quad \mbox{and} \quad  \alpha(\delta)\sim \delta,  $$
since the operator $F$ is weakly continuous (cf.~Proposition~\ref{pro:weaklycont}), which implies that $F$ is weakly closed. The latter result additionally requires the existence of a source element $w \in Y$  satisfying the smallness condition $\|w\|_Y \le 1$ such that the source condition
\begin{equation} \label{eq:source}
\fdag(\tau)-\bar f(\tau)= \int \limits _\tau^{\tau+1} k(\sY, \tX)\, \fdag(\sY-\tX)\, w(\sY)\, d\sY ,\qquad 0 \le \tau \le 1,
 \end{equation}
is fulfilled (cf.~\cite[Theorem~10.4]{EHN96}). However, in \cite[Proposition~2.6]{BuerHof15} it was shown that such source condition (\ref{eq:source}) is hardly possible to achieve even in the simplest cases of a kernel $k \equiv 1$.

\medskip

For wide classes of iterative regularization methods, however, the tangential cone condition
 \begin{equation}\label{eq:TCC}
  \|F(f+h) - F(f) - F'(f)\,h\|_Y \le \bar c\, \|F(f+h) - F(f)\|_Y\quad \mbox{for all} \;h \in \mathcal{B}(f,r)
  \end{equation}
is required, with a constant $0 \le \bar c <1,$ some radius $r>0$, and at least for $f \in X$ in a neighborhood of the solution $\fdag$ to (\ref{eq:opeq}) (see, e.g., \cite[Chapt.11]{EHN96}). Slightly modified variants of the nonlinearity condition (\ref{eq:TCC}) for
constants  $0 \le \bar c< \infty$ and terms $\theta(\|F(f+h) - F(f)\|_Y)$ instead of $\|F(f+h) - F(f)\|_Y$, with concave strictly increasing functions $\theta:(0,\infty) \to (0,\infty)$ and
 $\lim \limits_{t \to +0} \theta(t)=0$, are also relevant for obtaining convergence rates in Tikhonov regularization, in particular if the solution smoothness is low and approximate source conditions apply (see \cite[Section~4.2]{SKHK12}).

\begin{proposition} \label{pro:noTCC}
For the autoconvolution operator $F:X \to Y$ from (\ref{eq:F}), the tangential cone condition (\ref{eq:TCC}) cannot hold for any $f \in X$ with a constant $0 \le \bar c <1$ and a radius $r>0$.
\end{proposition}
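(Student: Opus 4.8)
The plan is to use the exact identity (\ref{eq:equa}), which rewrites the tangential cone condition (\ref{eq:TCC}) as a relation involving only $F(h)$ and the linear term $F'(f)h$, and then to test it against the oscillatory perturbations furnished by Proposition~\ref{pro:noncompact}. So I would argue by contradiction: assume (\ref{eq:TCC}) holds for some fixed $f \in X$, some $\bar c \in [0,1)$ and some $r>0$. By Proposition~\ref{pro:noncompact} there is a sequence $\{h_n\}_{n\in\N} \subset \mathcal{B}(0,1)$ with $h_n \rightharpoonup 0$ in $X$ such that, for this very $f$, the sequence $f_n := f + r\,h_n$ satisfies $F(f_n) \rightharpoonup F(f)$ but $F(f_n) \not\to F(f)$ in $Y$. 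The perturbations $r\,h_n$ have norm $\le r$ and are therefore admissible in (\ref{eq:TCC}).

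Next I would combine the algebraic identity with the two convergence properties. By (\ref{eq:equa}) applied with the perturbation $r\,h_n$ we have $F(f_n) - F(f) = F'(f)(r\,h_n) + F(r\,h_n)$, whence, using the triangle inequality and then (\ref{eq:TCC}) on the second summand,
\begin{align*}
  \|F(f_n) - F(f)\|_Y
  &\le \|F'(f)(r\,h_n)\|_Y + \|F(r\,h_n)\|_Y \\
  &\le \|F'(f)(r\,h_n)\|_Y + \bar c\,\|F(f_n) - F(f)\|_Y ,
\end{align*}
so that $(1-\bar c)\,\|F(f_n) - F(f)\|_Y \le \|F'(f)(r\,h_n)\|_Y$ for every $n \in \N$. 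Now I invoke the remaining content of Proposition~\ref{pro:noncompact}: since $F'(f)$ is a \emph{compact} linear operator and $r\,h_n \rightharpoonup 0$ in $X$, it maps this weakly null sequence to a norm-null sequence, i.e.\ $\|F'(f)(r\,h_n)\|_Y \to 0$. On the other hand, $F(f_n) \not\to F(f)$ supplies an $\varepsilon_0>0$ and an infinite index set $\mathcal{N}\subseteq\N$ with $\|F(f_n)-F(f)\|_Y \ge \varepsilon_0$ for $n\in\mathcal{N}$. Plugging this into the last displayed inequality gives $0 < (1-\bar c)\,\varepsilon_0 \le \|F'(f)(r\,h_n)\|_Y \to 0$ along $\mathcal{N}$, which is impossible because $\bar c < 1$. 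Hence no such $f$, $\bar c$, $r$ can exist.

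The substantive work has in fact already been done in Proposition~\ref{pro:noncompact}: constructing the highly oscillating sequence $h_n \rightharpoonup 0$ for which the nonlinear defect $F(f+r\,h_n)-F(f)$ persists in norm while the linearization $F'(f)$ remains compact is precisely the obstruction that the above short estimate then converts into the failure of (\ref{eq:TCC}). The only point that needs care is the interface between the two modes of convergence — it is essential that the linear term $F'(f)(r\,h_n)$ vanishes in the norm of $Y$, for which compactness of $F'(f)$ (not mere boundedness) is used, whereas the nonlinear term does not vanish in norm even though $F(f_n)\rightharpoonup F(f)$. No scaling or homogeneity of $F$ is needed beyond working directly with the perturbation $r\,h_n$, and the argument is uniform in $\bar c\in[0,1)$ and in $r>0$, so it rules out (\ref{eq:TCC}) at every $f\in X$.
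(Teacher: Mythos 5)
Your proposal is correct and follows essentially the same route as the paper's proof: the triangle inequality combined with the tangential cone condition yields $(1-\bar c)\,\|F(f+h)-F(f)\|_Y \le \|F'(f)\,h\|_Y$, which is then contradicted by the oscillating sequence from Proposition~\ref{pro:noncompact} together with the compactness of $F'(f)$. Your version is in fact slightly more careful than the paper's in keeping track of the scaling factor $r$ in the admissible perturbations $r\,h_n$, but the argument is otherwise identical.
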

\begin{proof}
The assertion is, in principle, a consequence of the noncompactness of $F$ in contrast to the compactness of $F^\prime(f)$ for all $f \in X$. Using the triangle inequality
$$ \|F(f+h) - F(f)\|_Y \le  \|F(f+h) - F(f) - F'(f)\,h\|_Y + \|F'(f)\,h\|_Y  ,$$
we would have from (\ref{eq:TCC}), for fixed $f \in X$ and $r>0$, the inequality
$$(1-\bar c) \|F(f+h) - F(f)\|_Y \le  \|F'(f)\,h\|_Y \quad \mbox{for all} \;h \in \mathcal{B}(f,r).$$
By substituting $h:=h_n$ in that inequality with $\{h_n\}_{n \in \N} \subset \mathcal{B}(f,r)$ from Proposition~\ref{pro:noncompact} satisfying $h_n \rightharpoonup 0, \; h_n \not \to 0$ in $X$ and $F(f+h_n) \not \to F(f)$ in $Y$ as $n \to \infty$, we arrive at a contradiction, because $F^\prime(f)$ is compact and hence satisfies the limit condition $\lim \limits_{n \to \infty}\|F^\prime(f)\,h_n \|_Y= 0$. This contradiction proves the proposition.
\end{proof}

\begin{remark} \label{rem:propend}
As originally discussed in \cite{HofSch94}, the four norm terms associated with the Taylor remainder equation, $$F(f+h)-F(f)=F^\prime(f)\,h+{\rm Rem}(f,h),$$ namely
$\|F(f+h)-F(f)\|_Y$, $\|{\rm Rem}(f,h)\|_Y=\|F(f+h)-F(f)-F^\prime(f)\,h\|_Y$,  $\|F^\prime(f)\,h\|_Y$, and $\|h\|_X$ show distinguished cross connections depending on whether the corresponding
nonlinear operator equation (\ref{eq:opeq}) is well-posed or ill-posed.
The convergence of well-posed problems usually results from the fact that the remainder $\|{\rm Rem}(f,h)\|_Y$ converges to zero faster
than the term $\|F(f+h)-F(f)\|_Y$ as \mbox{$\|h\|_X \to 0$}. In the ill-posed situation, however, where $F$ is a `smoothing' operator, the term $\|F(f+h)-F(f)\|_Y$ may be significantly smaller than $\|{\rm Rem}(f,h)\|_Y$ even for arbitrarily small $\|h\|_X$.
In the latter case, there exist operators $F$ for which the tangential cone condition (\ref{eq:TCC}) may fail to hold even for large constants $\bar c \ge 1$ as well as for modifications with terms $\theta(\|F(f+h) - F(f)\|_Y)$ rather than $\|F(f+h) - F(f)\|_Y$.
Then only Lipschitz continuity conditions for $F^\prime(f)$ such as (\ref{eq:Lipschitz}) can be seen as nonlinearity conditions. In this particular situation, the linear operator $F^\prime(f)$ does not cover sufficient information about the nonlinear operator in a neighborhood of $\mathcal{B}(f,r)$ for ensuring convergence rates of regularized solutions.
This information deficit seems to be the case for the autoconvolution operator $F$ from (\ref{eq:F}) and, unfortunately, prevents the establishment of convergence rates in Tikhonov regularization when the standard source condition
(\ref{eq:source}) fails.
\end{remark}

\section{A regularization approach for the phase retrieval problem}\label{s4}

\def\wto{\rightharpoonup}

   In this section, we collect some basic well-posedness results concerning the regularization approach by means of minimizing Tikhonov-type variational functionals for the different problems summarized in Subsections \ref{ssec:deautoconv} and \ref{ssec:phaseretrieval}.

   The theory of Tikhonov-regularization is well understood for ill-posed problems of the general form
    \begin{equation} \label{eq:invprob}
      \mathcal{F} (x) = y,
    \end{equation}
   where $x$ is a searched-for quantity of interest and $y$ is approximately known from measurements in the form of data $\yd$.
   We refer to the monographs \cite{EHN96, Scherzetal09, SKHK12} as well as to the seminal works \cite{EKN89, HKPS07} for detailed regularization results.
   For our purposes, let us recall the main sufficient condition on $\mathcal{F}: \mathrm{dom} (\mathcal{F}) \subset \mathcal{X} \to \mathcal{Y}$ \cite[Section 3.2]{Scherzetal09} for the well-posedness of minimizing a Tikhonov-type functional
    \[ \mathcal{T}_\alpha^\delta (x) = \frac 12 \big \| \mathcal{F} (x) - \yd \big \|_Y^2 + \alpha \Reg (x) \]
   in Hilbert spaces $\mathcal{X}$, $\mathcal{Y}$: namely, that the forward operator $\mathcal{F}$ be weakly sequentially closed in the sense that
      \[ x_n \wto \bar x \in \mathcal{X}  \mbox{ and }  \mathcal{F} (x_n) \wto \bar y \in \mathcal{Y}  \quad \Rightarrow \quad  \bar x \in \mathrm{dom} (\mathcal{F}) \mbox{ and } \mathcal{F} (\bar x) = \bar y \]
   hold for all sequences $\{ x_n \} \subset \mathrm{dom} (\mathcal{F})$.

   \begin{remark} \label{rem:well-posed}
      Note that we do not specify further the choice of the penalty term $\Reg (x)$. For the following results to hold true,  $\Reg (x)$ is required to be proper, weakly sequentially lower semicontinuous and to have weakly sequentially precompact sublevelsets. We will summarize these properties by saying that  $\Reg (x)$ is assumed to be {\em stabilizing}. The interested reader will easily verify that for stabilizing penalty terms $\Reg (x)$ in combination with weakly sequentially closed forward operator $\mathcal{F}$ with $\mathrm{dom} (\mathcal{F}) = \mathcal{X}$, Assumption 3.13 in \cite{Scherzetal09} is satisfied (with the exception of convexity of $\Reg$ which is, however, not required here). Consequently, the well-posedness results in \cite[Section~3.2]{Scherzetal09} hold true. In particular, we have:
      \begin{itemize}
       \item Existence of minimizers (cf.~Theorem~3.22);
       \item Weak, subsequential stability of the minimizers  (cf.~Theorem~3.23);
       \item Weak, subsequential convergence of the minimizers to an $\Reg$-minimizing solution as $\delta \to 0$ under suitable parameter choice rules (cf.~Theorem~3.26).
      \end{itemize}
   \end{remark}

   The deautoconvolution problem of Subsection \ref{ssec:deautoconv} is naturally of the form (\ref{eq:invprob}) with $\mathcal{F} = F$ from (\ref{eq:F}), and due to the properties in Section \ref{s3} we immediately obtain the following regularization result.

  \begin{proposition}
    Suppose that the kernel $k$ satisfies (\ref{eq:kernel}) and that the penalty term $\Reg (f)$ is {\em stabilizing} in $X = L^2_{\IC}(0,1)$. Then for any $g^\delta \in Y = L^2_{\IC}(0,2)$, minimizing the Tikhonov functional
      \[ \mathcal{T}_\alpha^\delta (f) = \frac 12 \big \| F (f) - g^\delta \big \|_Y^2 + \alpha \Reg (f) \]
    for $f \in X$ is well-posed in the sense of Remark \ref{rem:well-posed}.
  \end{proposition}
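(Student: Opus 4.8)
The plan is to reduce this regularization result entirely to the abstract framework recalled in Remark~\ref{rem:well-posed}. According to that remark, once we know that the forward operator $F$ from (\ref{eq:F}) is weakly sequentially closed with $\mathrm{dom}(F) = X$ and that $\Reg$ is stabilizing, Assumption~3.13 in \cite{Scherzetal09} is satisfied (apart from the dispensable convexity of $\Reg$), and hence the existence, stability and convergence statements quoted there carry over; this is precisely the meaning of well-posedness in the sense of Remark~\ref{rem:well-posed}. So the only thing left to verify is the weak sequential closedness of $F$.

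First I would note that $F$ is defined on all of $X = L^2_{\IC}(0,1)$, so $\mathrm{dom}(F) = X$ is trivially satisfied and, in particular, every weak limit $\bar f$ of a sequence in $\mathrm{dom}(F)$ lies again in $\mathrm{dom}(F)$. It therefore remains to check the implication
\[
  f_n \wto \bar f \text{ in } X \quad\text{and}\quad F(f_n) \wto \bar g \text{ in } Y
  \quad\Longrightarrow\quad F(\bar f) = \bar g .
\]
Here the key input is Proposition~\ref{pro:weaklycont}, which states that the autoconvolution operator $F$ is weakly continuous under the kernel assumption (\ref{eq:kernel}): from $f_n \wto \bar f$ we get $F(f_n) \wto F(\bar f)$ in $Y$. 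Since weak limits in a Hilbert space are unique, combining this with $F(f_n) \wto \bar g$ yields $F(\bar f) = \bar g$, which is exactly weak sequential closedness. (In fact weak continuity is strictly stronger than weak closedness, so no separate argument is needed.)

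Having established that $F$ is weakly sequentially closed on $X = \mathrm{dom}(F)$ and that $\Reg$ is stabilizing by hypothesis, I would simply invoke Remark~\ref{rem:well-posed}: Assumption~3.13 of \cite{Scherzetal09} holds (barring convexity of $\Reg$, which is not used for the well-posedness statements), so Theorems~3.22, 3.23 and 3.26 there apply to the functional $\mathcal{T}_\alpha^\delta$, giving existence of minimizers, their weak subsequential stability, and weak subsequential convergence to an $\Reg$-minimizing solution as $\delta \to 0$ under suitable parameter choices. This is the assertion of the proposition.

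There is essentially no obstacle here: the statement is a corollary, and the entire mathematical content has been pushed into Proposition~\ref{pro:weaklycont} (whose own proof is deferred to the appendix) and into the abstract theory of \cite{Scherzetal09}. The only point requiring a sentence of care is the remark that the convexity of $\Reg$, part of Assumption~3.13 in \cite{Scherzetal09}, is not needed for the three well-posedness conclusions we use — this is already flagged in Remark~\ref{rem:well-posed} and so can be cited rather than re-argued.
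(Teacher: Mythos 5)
Your argument is correct and coincides with the paper's own proof: both deduce weak sequential closedness of $F$ directly from the weak sequential continuity established in Proposition~\ref{pro:weaklycont} (noting $\mathrm{dom}(F)=X$) and then invoke the abstract well-posedness machinery summarized in Remark~\ref{rem:well-posed}. Your write-up merely spells out the uniqueness-of-weak-limits step that the paper leaves implicit.
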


  \begin{proof}
    The weak sequential closedness of $F: X \to Y$ follows readily from the weak sequential continuity in Proposition \ref{pro:weaklycont}. The result then follows arguing as in Remark \ref{rem:well-posed}.
  \end{proof}

  In the particular situation of phase retrieval problems as in Subsection \ref{ssec:phaseretrieval}, the data consists of two separate parts. On the one hand $a^\sigma$ contains information about the absolute value of the unknown function $f$, and, on the other hand, $\psinoisy \approx \arg (y^\dag)$ is related to the image space $Y$. For formulating this problem in the context of Tikhonov regularization, we first define the $\mathrm{Sign}$ operator on $L^2_{\IC}(0,2)$ pointwise almost everywhere (a.e.) by
      \[ [\mathrm{Sign} (g)] (\sY)  = \left\{ \begin{array}{ll} 	\frac {g(\sY)}{|g(\sY)|}	& \mbox{ if } g(\sY) \neq 0 \\[2mm]
									0				& \mbox{ else. } \end{array}\right. \]
  Then the forward operator in the sense of (\ref{eq:invprob}) also consists of two parts and maps as
    \begin{align*}
      \mathcal{F}_{\mathrm{PR}} : X = L^2_{\IC}(0,1) & \to \mathcal{Y} = L^2_{\IC}(0,2)  \times L^2(0,1)\\
      \mathcal{F}_{\mathrm{PR}} (f) & = \big( \mathrm{Sign} ( F(f) ), ~ |f| \big),
    \end{align*}
  where the function $|f| \in L^2(0,1)$ is again defined pointwise a.e.\,by
    $ |f|(\tX) = |f(\tX)| $
  for $f \in X$. Note that the product space $\mathcal{Y} = Y  \times L^2(0,1)$ is a Hilbert space endowed with any of the equivalent norms $\|(y,a)\|_{\mathcal{Y},\beta}^2 := \|y\|_Y^2 + \beta \|a\|_{X}^2$ corresponding to discrepancy terms
      \[ \big\| \mathcal{F}_{\mathrm{PR}} (f) - (\expfct{i \psinoisy}, a^\sigma) \big\|_{\mathcal{Y},\beta}^2 = \|\mathrm{Sign} ( F(f) ) - \expfct{i \psinoisy} \|_Y^2  + \beta \, \| \, |f| - a^\sigma \|_{X}^2. \]
  Even though the latter functional by itself resembles a Tikhonov functional, we emphasize that here the factor $\beta>0$ acts as a balancing weight between two discrepancy terms and does not assume the role of a regularization parameter. This difference is reflected in our notation, where in the following we clearly distinguish between regularization parameters (denoted by $\alpha$) and the discrepancy weight $\beta$. Nevertheless, it proved reasonable to consider well-established regularization parameter choice rules also for choosing $\beta$, and we present a suitable example in Section \ref{sec:numerics}.

  The operator $\mathcal{F}_{\mathrm{PR}} : X \to \mathcal{Y}$ is, however, \emph{not} weakly sequentially closed as neither $\mathrm{Sign} (g)$ nor $|f|$ have this property, which is easily seen by considering the sequences $g_n \equiv \frac 1n$ and $f_n(t) = \expfct{int}$, respectively. Thus, following the approach suggested in \cite{FruSchLei05}, we approximate the Sign operator for $\varepsilon \to +0$ by
      \[ [\mathrm{Sign}_\varepsilon (g)] (\sY)  = \frac {g(\sY)}{\max(\varepsilon, |g(\sY)|)}. \]
  While the latter operators are Lipschitz-continuous with constant $\frac 1\varepsilon$, they still do not satisfy the sufficient condition for well-posedness of Tikhonov regularization.

  \begin{lemma}
    For any $\varepsilon >0$, the operator $\mathrm{Sign}_\varepsilon (g) : L^2_{\IC}(0,2) \to L^2_{\IC}(0,2)$ is not weakly sequentially closed.
  \end{lemma}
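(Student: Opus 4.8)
The plan is to exhibit an explicit sequence $\{g_n\}$ in $L^2_{\IC}(0,2)$ that witnesses the failure of weak sequential closedness, mirroring the construction already mentioned in the text for the unregularized $\mathrm{Sign}$ operator but with the amplitudes scaled so that they land in the regularization regime $|g_n| \le \varepsilon$. Concretely, I would fix $\varepsilon > 0$ and set $g_n(\sY) := \varepsilon\, \expfct{i\,n\,\sY}$ (or $\frac{\varepsilon}{2}\,\expfct{i\,n\,\sY}$ to be safely below the threshold). By the Riemann--Lebesgue lemma these oscillatory functions converge weakly to $0$ in $L^2_{\IC}(0,2)$, so $g_n \wto 0$. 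On the other hand, since $|g_n(\sY)| = \varepsilon$ (resp.\ $\varepsilon/2 \le \varepsilon$) for all $\sY$, the definition gives $[\mathrm{Sign}_\varepsilon(g_n)](\sY) = \frac{g_n(\sY)}{\varepsilon} = \expfct{i\,n\,\sY}$ (resp.\ $\frac12 \expfct{i n \sY}$), which again converges weakly to $0$ by Riemann--Lebesgue. Thus $g_n \wto 0$ and $\mathrm{Sign}_\varepsilon(g_n) \wto 0$, but $\mathrm{Sign}_\varepsilon(0) = 0 \neq 0$ is \emph{not} a contradiction by itself --- so this naive choice fails.

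The fix is to make the weak limit of the images \emph{not} equal to the image of the weak limit. I would instead perturb by a fixed nonzero function: set $g_n(\sY) := g_0(\sY) + \varepsilon\,\expfct{i\,n\,\sY}$ where $g_0$ is chosen so that $|g_0(\sY) + \varepsilon\,\expfct{in\sY}| \le \varepsilon$ fails to hold --- no; rather, the cleanest route is to keep the images below threshold while the limit of the images is a nonzero constant. Take $g_n(\sY) := \frac{\varepsilon}{2}\big(1 + \expfct{i\,n\,\sY}\big)$. Then $|g_n(\sY)| \le \varepsilon$ pointwise, so $\mathrm{Sign}_\varepsilon(g_n) = \frac{1}{\varepsilon} g_n = \frac12\big(1 + \expfct{in\sY}\big)$. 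As $n \to \infty$ we have $g_n \wto \frac{\varepsilon}{2}$ and $\mathrm{Sign}_\varepsilon(g_n) \wto \frac12$ by Riemann--Lebesgue. But $\mathrm{Sign}_\varepsilon\!\big(\tfrac{\varepsilon}{2}\big)(\sY) = \frac{\varepsilon/2}{\max(\varepsilon,\varepsilon/2)} = \frac12$ as well --- so again no contradiction. The subtlety is that on the sublevel region $\{|g| \le \varepsilon\}$ the operator $\mathrm{Sign}_\varepsilon$ is \emph{linear} (it is just division by $\varepsilon$), hence trivially weakly continuous there; the failure of weak closedness must come from the nonlinear ``kink'' at $|g| = \varepsilon$.

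So the decisive construction must straddle the threshold: I would take a fixed $g_0$ with $|g_0(\sY)| = \varepsilon$ on part of $[0,2]$ and combine it with a small oscillation whose \emph{sign-averaged} effect pushes the normalization the wrong way. The canonical trick (as in \cite{FruSchLei05}) is $g_n(\sY) := \varepsilon + \varepsilon\,\expfct{i\,n\,\sY}$: then $|g_n(\sY)| = \varepsilon\,|1 + \expfct{in\sY}| = 2\varepsilon|\cos(n\sY/2)| \in [0, 2\varepsilon]$, which exceeds $\varepsilon$ on a set of measure bounded below. One computes $[\mathrm{Sign}_\varepsilon(g_n)](\sY) = \frac{1 + \expfct{in\sY}}{\max(1, |1+\expfct{in\sY}|)}$, a bounded sequence whose weak limit I can identify via the two-scale / averaging argument: it converges weakly to the constant $c := \int_0^{1} \frac{1 + \expfct{2\pi i t}}{\max(1,|1+\expfct{2\pi i t}|)}\,\diff t$. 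Meanwhile $g_n \wto \varepsilon$, and $[\mathrm{Sign}_\varepsilon(\varepsilon)](\sY) = \frac{\varepsilon}{\varepsilon} = 1 \neq c$ (the integrand has modulus $\le 1$ and is $<1$ on a positive-measure set, so $|c| < 1$). Hence $g_n \wto \varepsilon$ and $\mathrm{Sign}_\varepsilon(g_n) \wto c$ but $\mathrm{Sign}_\varepsilon(\varepsilon) = 1 \neq c$, contradicting weak sequential closedness.

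The main obstacle --- and the only step requiring real care --- is the identification of the weak limit of $\mathrm{Sign}_\varepsilon(g_n)$. For this I would invoke the standard fact that if $G \in L^\infty(\R)$ is $2\pi$-periodic with mean $\overline{G} = \frac{1}{2\pi}\int_0^{2\pi} G$, then $G(n\,\cdot) \wto \overline{G}$ in $L^2$ of any bounded interval as $n \to \infty$ (a consequence of Riemann--Lebesgue applied to Fourier coefficients, or of the classical averaging lemma for periodic functions); applying this with $G(\xi) = \frac{1+\expfct{i\xi}}{\max(1,|1+\expfct{i\xi}|)}$ yields the value $c$ above. I would also need the elementary observation that $|c| < 1$ strictly, which follows because $|G(\xi)| \le 1$ everywhere with strict inequality on $\{\,|1+\expfct{i\xi}| > 1\,\}$, a set of positive measure, so the mean has modulus strictly below $1$ by the strict triangle inequality for integrals. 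Everything else --- that $g_n \wto \varepsilon$, that $\mathrm{Sign}_\varepsilon$ is well-defined on all of $L^2_{\IC}(0,2)$, and that $\mathrm{Sign}_\varepsilon(\varepsilon) = 1$ --- is immediate.
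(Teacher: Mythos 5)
Your final construction is correct, but it takes a genuinely different route from the paper's. The paper works with a real-valued, piecewise constant sequence lying entirely above the threshold: starting from $g_0 = 2K$ on $(0,2/3)$ and $-K$ on $[2/3,2)$ with $K>\varepsilon$, it compresses two copies of $g_n$ into $(0,2)$ at each step, so that $g_n \rightharpoonup 0$ (the mean value of $g_0$) while $\mathrm{Sign}_\varepsilon(g_n) \in \{+1,-1\}$ converges weakly to the mean $-\tfrac13 \neq 0 = \mathrm{Sign}_\varepsilon(0)$; the only weak limits needed are those of dyadically rescaled step functions, which are immediate. Your sequence $g_n(\sY)=\varepsilon+\varepsilon\,\expfct{i n \sY}$ instead oscillates around the constant $\varepsilon$ and straddles the threshold, and you identify the weak limit of the images via the periodic averaging lemma applied to $G(n\,\cdot)$ with $G(\xi)=(1+\expfct{i\xi})/\max(1,|1+\expfct{i\xi}|)$, then conclude $|c|<1 = \mathrm{Sign}_\varepsilon(\varepsilon)$ by the strict triangle inequality for integrals. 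This yields a counterexample in the same spirit as the $\expfct{int}$ oscillations used elsewhere in the paper, at the cost of invoking the averaging lemma; the paper's example is more elementary. One small correction: $|G(\xi)|<1$ holds on $\{\xi \st |1+\expfct{i\xi}|<1\}$ (where the maximum equals $1$ and $|G(\xi)|=|1+\expfct{i\xi}|$), not on $\{\xi \st |1+\expfct{i\xi}|>1\}$ (where $|G|\equiv 1$); since the former set, $\xi\in(2\pi/3,4\pi/3)$ modulo $2\pi$, has positive measure, your conclusion $|c|<1$ stands. Also, your heuristic that the failure ``must come from the kink at $|g|=\varepsilon$'' is not quite accurate --- the paper's counterexample exploits only the nonlinearity of the normalization $g\mapsto g/|g|$ on the region $\{|g|>\varepsilon\}$ together with a weak limit that lands in the sublevel region --- but this does not affect the validity of your proof.
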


  \begin{proof}
    To prove the assertion we construct a counterexample. Let $K > \varepsilon$ be fixed and define the sequence $\{ g_n \}$ recursively by
	\[ g_0 (t) = \left\{ \begin{array}{ll} 	2K	& \mbox{ if } 0 < t < 2/3\\[2mm]
						-K	& \mbox{ if } 2/3 \leq t < 2 \end{array}\right. \quad \mbox{and} \quad
	   g_{n+1} (t) = \left\{ \begin{array}{ll} 	g_n(2t)		& \mbox{ if } 0 < t < 1\\[2mm]
							g_n(2t-1)	& \mbox{ if } 1 \leq t < 2. \end{array}\right.\]
    In fact, for these choices $g_n \wto 0$ holds, yet $\mathrm{Sign}_\varepsilon (g_n) \wto -\frac 13 \neq \mathrm{Sign}_\varepsilon (0)$, which shows that $\mathrm{Sign}_\varepsilon$ is not weakly sequentially closed.
  \end{proof}

  In a sense, the weak topology in $L^2_{\IC}(0,2)$ is insufficient to derive continuity results even for $\mathrm{Sign}_\varepsilon (g)$. A common remedy for obtaining regularization properties for operators that are continuous is to restrict ourselves to some subspace with compact embedding into $L^2_{\IC}(0,2)$. For the phase retrieval problem, we may thus formulate the following well-posedness result.

  \begin{proposition} \label{pro:reg_pr}
    Suppose that the kernel $k$ satisfies (\ref{eq:kernel}) and that the penalty term $\Reg (f)$ is stabilizing in $X_0 = H^1_{\IC}(0,1)$.
    Then for any $\varepsilon > 0$, $a^\sigma \in L^2 (0,1)$ and \mbox{$\psinoisy \in L^2(0,2)$}, minimizing the Tikhonov functional
      \begin{equation} \label{eq:TikhPR}
        \mathcal{T}_{\alpha, \beta}^{\sigma, \delta} (f) = \frac 12 \Big\|\mathrm{Sign}_\varepsilon ( F(f) ) - \expfct{i \psinoisy} \Big\|_Y^2  + \frac{\beta}2 \, \big\| \, |f| - a^\sigma \big\|_{X}^2 + \alpha \Reg (f)
      \end{equation}
    for $f \in X_0$ is well-posed in the sense of Remark \ref{rem:well-posed}.
  \end{proposition}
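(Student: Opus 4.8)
The plan is to verify that $\mathcal{F}_{\mathrm{PR},\varepsilon}: X_0 = H^1_{\IC}(0,1) \to \mathcal{Y} = L^2_{\IC}(0,2) \times L^2(0,1)$, defined by $\mathcal{F}_{\mathrm{PR},\varepsilon}(f) = (\mathrm{Sign}_\varepsilon(F(f)), |f|)$, is weakly sequentially closed, so that the well-posedness machinery summarized in Remark \ref{rem:well-posed} applies with $\mathcal{X} = X_0$ and $\mathrm{dom}(\mathcal{F}_{\mathrm{PR},\varepsilon}) = X_0$; the result then follows exactly as indicated there. So let $\{f_n\} \subset X_0$ with $f_n \wto \bar f$ in $H^1_{\IC}(0,1)$ and $\mathcal{F}_{\mathrm{PR},\varepsilon}(f_n) \wto (\bar g, \bar a)$ in $\mathcal{Y}$. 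The key point is that the embedding $H^1_{\IC}(0,1) \hookrightarrow L^2_{\IC}(0,1)$ (in fact into $C[0,1]$) is compact, so $f_n \to \bar f$ strongly in $L^2_{\IC}(0,1)$ and hence (passing to a subsequence) pointwise a.e.\ on $[0,1]$. I would then show each component converges to the right limit: (i) $|f_n| \to |\bar f|$ strongly in $L^2(0,1)$, by continuity of $t \mapsto |t|$ together with a dominated-convergence/strong-convergence argument, whence $\bar a = |\bar f|$; and (ii) $\mathrm{Sign}_\varepsilon(F(f_n)) \to \mathrm{Sign}_\varepsilon(F(\bar f))$ strongly in $Y$, whence $\bar g = \mathrm{Sign}_\varepsilon(F(\bar f))$. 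Strong convergence of either component identifies the weak limit, giving $\mathcal{F}_{\mathrm{PR},\varepsilon}(\bar f) = (\bar g, \bar a)$ and closing the argument.

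For step (ii) the chain is: first, $f_n \to \bar f$ strongly in $L^2_{\IC}(0,1)$ implies $F(f_n) \to F(\bar f)$ strongly in $Y = L^2_{\IC}(0,2)$. This is stronger than the mere weak continuity of Proposition \ref{pro:weaklycont}, but it follows readily from the bilinear structure and the Lipschitz-type bound: using \eqref{eq:equa} and \eqref{eq:Lipschitz}, one has $\|F(f_n) - F(\bar f)\|_Y \le \|F(f_n) - F(\bar f) - F'(\bar f)(f_n - \bar f)\|_Y + \|F'(\bar f)(f_n - \bar f)\|_Y \le \bar k\,\|f_n - \bar f\|_X^2 + \|F'(\bar f)\|\,\|f_n - \bar f\|_X \to 0$. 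Then I invoke the (standard) Nemytskii-type continuity of $\mathrm{Sign}_\varepsilon$ as a map $L^2_{\IC}(0,2) \to L^2_{\IC}(0,2)$: since $z \mapsto z/\max(\varepsilon, |z|)$ is globally Lipschitz on $\IC$ with constant $2/\varepsilon$ (a short elementary estimate), one gets $\|\mathrm{Sign}_\varepsilon(u) - \mathrm{Sign}_\varepsilon(v)\|_Y \le \tfrac 2\varepsilon \|u-v\|_Y$ for all $u,v \in Y$, so strong convergence $F(f_n) \to F(\bar f)$ passes through. Combining, $\mathrm{Sign}_\varepsilon(F(f_n)) \to \mathrm{Sign}_\varepsilon(F(\bar f))$ strongly in $Y$.

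For step (i), from $f_n \to \bar f$ in $L^2_{\IC}(0,1)$ I extract a subsequence converging a.e.\ with an $L^2$-dominating function; then $|f_n| \to |\bar f|$ a.e.\ and dominated, so $|f_n| \to |\bar f|$ in $L^2(0,1)$. Since this holds along a subsequence of every subsequence, the full sequence converges, and the weak limit $\bar a$ must equal $|\bar f|$. Finally, the two strong limits identify the weak limit $(\bar g, \bar a)$ in $\mathcal{Y}$ as $\mathcal{F}_{\mathrm{PR},\varepsilon}(\bar f)$, so $\mathcal{F}_{\mathrm{PR},\varepsilon}$ is weakly sequentially closed on all of $X_0$. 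With $\Reg$ stabilizing in $X_0 = H^1_{\IC}(0,1)$, Assumption 3.13 of \cite{Scherzetal09} is met (convexity aside, which is not needed), so existence, stability and convergence of the minimizers of $\mathcal{T}_{\alpha,\beta}^{\sigma,\delta}$ hold as in Remark \ref{rem:well-posed}, which is the assertion.

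I expect the main obstacle to be the verification of the global Lipschitz property of $\mathrm{Sign}_\varepsilon$ on $\IC$ — it requires a small case distinction according to whether $|z|, |w|$ lie below or above $\varepsilon$ — and, relatedly, being careful that the compact embedding is genuinely needed here (it is: without it, the counterexample in the preceding Lemma shows $\mathrm{Sign}_\varepsilon$ alone is not weakly closed on $L^2_{\IC}(0,2)$, and $|\cdot|$ fails weak closedness on $L^2$ too, e.g.\ via $f_n(t) = \expfct{int}$). Everything else is routine: the strong continuity of $F$ on $L^2$ is immediate from \eqref{eq:Lipschitz} and boundedness of $F'(\bar f)$, and the $|f_n| \to |\bar f|$ step is a textbook dominated-convergence subsequence argument.
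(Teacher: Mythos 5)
Your proof is correct and follows the same route as the paper: exploit the compact embedding $H^1_{\IC}(0,1)\hookrightarrow L^2_{\IC}(0,1)$ to upgrade weak convergence to strong convergence, deduce strong sequential continuity (hence weak sequential closedness) of both components $\mathrm{Sign}_\varepsilon(F(\cdot))$ and $|\cdot|$, and conclude via Remark \ref{rem:well-posed}. The paper states this in three lines; you supply the details (the Lipschitz bound for $\mathrm{Sign}_\varepsilon$, the strong continuity of $F$ from \eqref{eq:Lipschitz}), the only minor remark being that $\||f_n|-|\bar f|\|_{L^2}\le\|f_n-\bar f\|_{L^2}$ by the reverse triangle inequality, so no dominated-convergence subsequence argument is needed for step (i).
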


  \begin{proof}
    Due to the compact embedding of $X_0 \hookrightarrow X$, both operators
	\[ \mathrm{Sign}_\varepsilon ( F(f) ) : X_0 \to Y \qquad \mbox{and} \qquad |f| : X_0 \to X \]
    are strongly sequentially continuous. Hence they are in particular weakly sequentially closed.
    The result then follows from the identical argument as in Remark \ref{rem:well-posed}.
  \end{proof}

  \begin{remark}
    As our notation $\psinoisy$ suggests, the measurement errors affect the phase function data additively, i.e.,
      \[ \psinoisy =  \psi^\dag + \vartheta, \]
    where $\vartheta \in L^2(0,2)$ denotes the noise. Due to discontinuities in the principal part of the complex argument, however, we evaluate data discrepancy on the complex unit sphere $S_\IC^1$, where we have a multiplicative noise model,
      \[ \expfct{i \psinoisy} = \expfct{i \psi^\dag} \cdot \expfct{i \vartheta}. \]
    Finding a discrepancy term that suitably addresses this particular situation is certainly future work, but motivated by the regularization results in Proposition~\ref{pro:reg_pr} and the successful numerical experiments in Section~\ref{sec:numerics}, we use the subspace topology from $Y = L_\IC^2 (0,2)$ on $\{ \expfct{i \psi} \st \psi \in L^2(0,2) \} \subset Y$
    instead, and hence introduce the pseudo-metric $d(\psi_1, \psi_2) := \| \expfct{i \psi_1} - \expfct{i \psi_2} \|_Y$ for $\psi_1, \psi_2 \in L^2(0,2)$. Note that this distance penalizes phase differences modulo $2 \pi$ and does not increase the noise level, as
      \[ d(\psinoisy, \psi^\dag) = \Big\| 2 \sin \Big( \frac{\psinoisy - \psi^\dag}{2} \Big) \Big\|_{L^2(0,2)} \leq \big\| \psinoisy - \psidag \big\|_{L^2(0,2)}. \]
  \end{remark}

   As mentioned in Section \ref{s2}, the laser pulses generated in optical experiments exhibit a limited bandwidth; cf.~(\ref{supp1}). In the context of our current notation this requires that the searched-for function $f^\dag$ (which corresponds to the Fourier transform $\hat E$ of the laser pulse) can be neglected outside a certain parameter range $[\tau_1, \tau_2] \subset (0,1)$. Due to the structure of the autoconvolution operator, the same is to be expected for the image $y^\dag = F(f^\dag)$. Therefore, the phase data $\psinoisy \approx \arg (\ydag)$ carries little or no useful information in regions where $|y^\dag|$ is close to zero.
   As reliable measurements for $|y^\dag|$ are, however, not at hand, this fact is not accounted for in the Tikhonov-functional (\ref{eq:TikhPR}).
   To overcome this issue, a different data fidelity term has been proposed in \cite{BSKGBH15}, where the phase data discrepancy is weighted by $|F(f)|$ or, more precisely, by the normalized function $|F(f)|/\|F(f)\|_Y$. While this normalization is necessary to avoid an artificial bias towards reconstructions for which $|F(f)|$ is small, it also introduces a singularity for $\|F(f)\|_Y = 0$. Therefore, we again introduce an approximation level $\varepsilon > 0$ and consider the variational functional
      \begin{equation} \label{eq:TikhPRrel}
        \mathcal{T}_{\varepsilon, \alpha, \beta}^{\sigma, \delta} (f)
        = \left\{ \begin{array}{ll} \frac {\left \| F(f)  - |F(f)| \expfct{i \psinoisy} \right \|_Y^2}{\|F(f)\|_Y^2}  + \frac{\beta}2 \, \big\| \, |f| - a^\sigma \big\|_X^2 + \alpha \Reg (f), & \mbox{ if } \|F(f)\|_Y \geq \varepsilon,\\[2mm]
					  +\infty,   & \mbox{ else,} \end{array}\right.
      \end{equation}
   with $Y = L_\IC^2 (0,2)$. The proof that this functional also admits a minimizer in $H^1_{\IC}(0,1)$ is included in the Appendix.

   \begin{proposition} \label{pro:tikhrelmin}
    Suppose that the kernel $k$ satisfies (\ref{eq:kernel}) and that the penalty term $\Reg (f)$ is stabilizing in $X_0 = H^1_{\IC}(0,1)$ in the sense of Remark \ref{rem:well-posed}.
    Then for any $a^\sigma \in L^2 (0,1)$, \mbox{$\psinoisy \in L^2(0,2)$}, and $\varepsilon > 0$ such that
      \[ \mathrm{dom} \, \Reg(f) \cap \{ f \in X_0 \st \|F(f)\|_Y\geq \varepsilon \} \neq \emptyset, \]
    a minimizer of $\mathcal{T}_{\varepsilon, \alpha, \beta}^{\sigma, \delta} (f)$ defined by (\ref{eq:TikhPRrel}) exists in $X_0$.
   \end{proposition}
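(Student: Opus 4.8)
The plan is to run the direct method of the calculus of variations, in exactly the spirit of Remark~\ref{rem:well-posed} and the proof of Proposition~\ref{pro:reg_pr}; the only feature that needs extra attention is the quotient structure of the first term in (\ref{eq:TikhPRrel}), i.e. the factor $\|F(f)\|_Y^2$ in the denominator. Since each of the three terms in (\ref{eq:TikhPRrel}) is nonnegative on the region $\{f \in X_0 \st \|F(f)\|_Y \geq \varepsilon\}$, and the effective domain $\mathcal{D} := \mathrm{dom}\,\Reg \cap \{f \in X_0 \st \|F(f)\|_Y \geq \varepsilon\}$ is nonempty by hypothesis, the infimum $m := \inf_{f \in X_0} \mathcal{T}_{\varepsilon, \alpha, \beta}^{\sigma, \delta}(f)$ is a finite nonnegative number. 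First I would pick a minimizing sequence $\{f_n\} \subset X_0$; without loss of generality $\mathcal{T}_{\varepsilon, \alpha, \beta}^{\sigma, \delta}(f_n) \leq m+1$ for all $n$, so in particular $f_n \in \mathcal{D}$, i.e. $\|F(f_n)\|_Y \geq \varepsilon$, and $\alpha\Reg(f_n) \leq m+1$, so that $\{f_n\}$ lies in a sublevel set of $\Reg$. Because $\Reg$ is stabilizing in $X_0 = H^1_{\IC}(0,1)$, a subsequence (not relabeled) satisfies $f_n \rightharpoonup \bar f$ in $X_0$.

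The key step is to pass to the limit in every term. By the compact embedding $X_0 = H^1_{\IC}(0,1) \hookrightarrow X = L^2_{\IC}(0,1)$ one has $f_n \to \bar f$ strongly in $X$, and since $F : X \to Y$ is continuous (indeed Fr\'echet differentiable, cf.~(\ref{eq:Frech}), and essentially bilinear), it follows that $F(f_n) \to F(\bar f)$ in $Y$. In particular $\|F(f_n)\|_Y \to \|F(\bar f)\|_Y$, whence $\|F(\bar f)\|_Y \geq \varepsilon > 0$, so $\bar f$ again belongs to the region where (\ref{eq:TikhPRrel}) is finite and the denominator is under control. Moreover, pointwise a.e.\ the modulus is $1$-Lipschitz on $\IC$, so $\| \, |F(f_n)| - |F(\bar f)| \, \|_Y \leq \|F(f_n) - F(\bar f)\|_Y \to 0$ and, likewise, $\| \, |f_n| - |\bar f| \, \|_X \to 0$; multiplying the former by the fixed unimodular factor $\expfct{i\psinoisy} \in L^\infty(0,2)$ preserves the convergence. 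Consequently the numerator $\|F(f_n) - |F(f_n)|\expfct{i\psinoisy}\|_Y^2$ converges to $\|F(\bar f) - |F(\bar f)|\expfct{i\psinoisy}\|_Y^2$, the denominator $\|F(f_n)\|_Y^2$ converges to $\|F(\bar f)\|_Y^2 \geq \varepsilon^2$, hence the whole fraction converges; similarly $\frac{\beta}{2}\,\| \, |f_n| - a^\sigma \, \|_X^2 \to \frac{\beta}{2}\,\| \, |\bar f| - a^\sigma \, \|_X^2$.

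Finally I would combine these limits with the weak sequential lower semicontinuity of the stabilizing penalty, $\Reg(\bar f) \leq \liminf_{n\to\infty}\Reg(f_n) < \infty$, which also shows $\bar f \in \mathcal{D}$. Adding the three contributions gives
\[ \mathcal{T}_{\varepsilon, \alpha, \beta}^{\sigma, \delta}(\bar f) \;\leq\; \liminf_{n\to\infty} \mathcal{T}_{\varepsilon, \alpha, \beta}^{\sigma, \delta}(f_n) \;=\; m, \]
and since $\bar f \in \mathcal{D}$ we also have $\mathcal{T}_{\varepsilon, \alpha, \beta}^{\sigma, \delta}(\bar f) \geq m$, so $\bar f$ is a minimizer. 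The one point that really requires care --- and the reason the threshold $\varepsilon$ and the value $+\infty$ were built into (\ref{eq:TikhPRrel}) --- is precisely the control of the denominator: one must ensure both that the minimizing sequence stays in $\{\|F(f)\|_Y \geq \varepsilon\}$ and that this constraint set is preserved in the weak-$H^1$ / strong-$L^2$ limit, which is exactly what the strong continuity of $F$ afforded by the compact embedding delivers; without the $\varepsilon$-cutoff a minimizing sequence could in principle drive $\|F(f_n)\|_Y$ to $0$ and the infimum would fail to be attained.
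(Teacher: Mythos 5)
Your proposal is correct and follows essentially the same route as the paper's proof: extract a minimizing sequence in the sublevel set, use the stabilizing penalty to get weak $H^1$-convergence, the compact embedding to upgrade to strong $L^2$-convergence and hence $F(f_n)\to F(\bar f)$ in $Y$ with $\|F(\bar f)\|_Y\geq\varepsilon$, pass to the limit in both discrepancy terms, and finish with the weak lower semicontinuity of $\Reg$. The only cosmetic difference is that the paper phrases the preservation of the constraint via closedness of the set $A_\varepsilon=\{f\st\|F(f)\|_Y\geq\varepsilon\}$ in $X$, whereas you argue directly from $\|F(f_n)\|_Y\to\|F(\bar f)\|_Y$; these are equivalent.
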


  \begin{remark}
    Another phenomenon of ill-posedness in solving the autoconvolution equation (cf.~\cite{FleiHof96,GHBKS13} and \cite[Example~3.2]{BuerHof15}) is due to amplitudes $a(\tX)$ in $f(\tX)=a(\tX)\,\expfct{i\,\varphi(\tX)}$. It occurs when $a(\tX)$ blows up to infinity locally near some $\tX_0 \in [0,1]$ in a way that hardly leaves effects on $F(f)$. This phenomenon, however, is suppressed in case of the phase retrieval problem because the amplitude function $a^\dagger$ is known up to measurement errors.
    It was, indeed, proven in \cite[Proposition~3]{BuerHof15} that even locally well-posed situations arise under the assumption that the amplitude function $a$ be fixed and essentially bounded.
  \end{remark}

\def\FwdD{F_n}

\section{Non-uniform rational B-splines (NURBS)}  \label{sec:NURBS}

  The numerical solution of the deautoconvolution problem \ref{ssec:deautoconv} as well as of phase retrieval problems \ref{ssec:phaseretrieval} requires discretization of the complex-valued autoconvolution equation with kernel function (\ref{eq:F}).
  The most natural choice are discretizations using piecewise constant functions, either in terms of step functions $\{ \chi_{[i/n, (i+1)/n)} \}_{i=0, \dots, n-1}$ (see, e.g., \cite{FleiHof96, Janno00, BuerFle14, Bue14}) or by means of Haar wavelets (e.g., \cite{ar10, WARH15}).
  While step functions yield simple (and computationally efficient) formulae, essentially reducing the continuous autoconvolution to its discrete counterpart, Haar wavelets are particularly suitable for the reconstruction of functions in $L^2(0,1)$ as they yield an orthonormal basis both of the infinite dimensional Lebesgue space as well as of its truncated, finite-dimensional approximations.
  Motivated by the results in Section \ref{s4} concerning the existence of minimizers of the Tikhonov functional in the smoother space $H_\IC^1(0,1)$ and by the underlying physical problem of ultrashort laser pulse characterization (see Section \ref{s2}), we focus on a different representation in the sequel, which is taylored towards the reconstruction of smooth functions.
  Namely, we will model the curve $(\RT f, \IT f)$ in the complex plane (and hence $f$ itself) as a rational B-spline curve.

  The shape of a non-uniform rational B-spline (short: NURBS) is determined by a set of $n$ \emph{control points} $\{ P_1, \dots, P_n \} \subset \IR^2$, corresponding positive \emph{weights} $w = \{ w_1, \dots, w_n \}$ and a non-decreasing \emph{knot vector} $\knot = \{ \knot_1, \dots, \knot_{n + \p +1} \}$, where $\p$ is the polynomial degree of the spline.
  From the knot vector $\knot$, the B-spline basis functions $N_{j,\p}$ are determined via the \emph{Cox-de Boor} recursion
    \begin{align*}
      N_{j, 0}    (\tX) & = \Big\{ \begin{array}{ll} 1 & \mbox{ if } \knot_j \leq \tX < \knot_{j+1} \\ 0 & \mbox{ otherwise,}  \end{array}\\
      N_{j, \p} (\tX) & = \frac{\tX - \knot_{j}}{\knot_{j + \p} - \knot_{j}} N_{j,\p-1} (\tX) + \frac{\knot_{j + \p +1} - \tX}{\knot_{j + \p +1} - \knot_{j+1}} N_{j+1,\p-1} (\tX).
    \end{align*}
  Notice that throughout this section we adopt the convention $\frac 00 := 0$ as is customary in this context. The NURBS curve is then given by
    \[ \gamma [P, w] (\tX) = \frac{\sum_{j=1}^n P_j w_j N_{j,\p} (\tX)}{\sum_{j=1}^n w_j N_{j,\p} (\tX)}, \qquad \tX_{\min} \leq \tX \leq \tX_{\max}, \]
  where $\tX_{\min} := \knot_{\p+1}$, $\tX_{\max} := \knot_{n+1}$.  We refer the interested reader to \cite{pt97, r01} for a comprehensive introduction to NURBS.
   Defining \emph{rational basis functions} as
	\[ R_{j,\p} (\tX) = \frac{w_j N_{j,\p} (\tX)}{\sum_{l=1}^n w_l N_{l,\p} (\tX)}\]
  the curve can be equivalently written as
    \[ \gamma [P, w] (\tX) = \sum_{j=1}^n P_j R_{j, \p} (\tX). \]
  It can be easily seen from these definitions that NURBS curves are invariant (up to re-parametrization) under rescalings and shifts of the knot vector.
  Without loss of generality, we may thus assume that the knot vector satisfies $\knot_{\p+1} = 0$ and $\knot_{n+1} = 1$, so that the NURBS curve is parametrized by $\tX \in [0, 1]$.
  A common choice are \emph{open} knot vectors of the form
  	\[ \knot = (0, \dots, 0, \knot_{p+2}, \dots, \knot_n, 1, \dots, 1), \]
  which have the additional property that the resulting NURBS curve $\gamma(\tX)$ begins and ends in the first and last control point, respectively, i.e., it satisfies $\gamma(0) = P_1$ and $\gamma(1) = P_n$.

\subsection*{Autoconvolution and NURBS}

  For planar curves the NURBS control points $P_j$ are typically assumed to be in $\IR^2$. Nevertheless, the formulae and results in Section~\ref{sec:NURBS} remain valid, if we model them as $P_j \in \IC$ which allows to conveniently view the parametrized spline curves as complex-valued functions.
  In addition, we shall refer to the real and imaginary parts of the control points by $\Pre_j$ and $\Pim_j$, respectively, so that $P_j = \Pre_j + i \Pim_j$.
  The main motivation for working with real parameters is the fact that real-valued functionals, such as the variational Tikhonov-type objective functionals defined in Section \ref{s4}, are not holomorphic.
  When using gradient based optimization methods, it is therefore necessary to consider partial derivatives with respect to $\Pre$ and $\Pim$.

  Now let the spline degree $\p$, the number $n \geq p+1$ of control points, as well as the knot vector $\knot$ be fixed. Then, we denote the finite dimensional space of NURBS \emph{design parameters} $x = (\Pre, \Pim, w)$ by
     \[ X_n := \IR^n \times \IR^n \times \IR_+^n ~ \subset \IR^{3n} \]
  and by $\gamma : X_n \to X = L_\IC^2 (0, 1)$ the synthesis operator
    \[ \gamma [x] (\tau) = \sum_{j = 1}^n (\Pre_j + i \Pim_j) R_{j,p}(\tau), \qquad \mbox{with }  R_{j,\p} (\tX) = \frac{w_j N_{j,\p} (\tX)}{\sum_{l=1}^n w_l N_{l,\p} (\tX)}. \]
  In terms of the complex-valued NURBS curves, the autoconvolution operator maps as
	\begin{equation} \label{eq:fwdop}
		\FwdD = F \circ \gamma : X_n \to Y = L_\IC^2(0, 2),
	\end{equation}
  where $F : X \to Y$ is given by (\ref{eq:F}) with $f = \gamma [x]$. In this way, we aim to reconstruct approximations $f$ in the discretized space
      \[ X_{n, p}^\eta : = \left\{ \gamma [x]  ~ : ~  x \in X_n  \right\} \subset H_\IC^1(0,1). \]

\bigskip

\subsection*{Regularization for NURBS design parameters}

\def\cb{{(\wa)^{-4}}} 
\def\rwa{{-2}}
\def\rwaa{2}
\def\rwb{{2}}
\def\wa{{w_0/\rwaa}}

  As illustrated in Section~\ref{s4}, the regularity of the forward operator for the phase retrieval problem~\ref{ssec:phaseretrieval} in its continuous form guarantees existence of minimizers of Tikhonov-type functionals in spaces that embed compactly into $L_\IC^2 (0,1)$. In choosing a discretization space that consists of sufficiently regular spline curves, we readily ensure that the reconstructed solutions belong to $H_\IC^1 (0,1)$.
  As penalty term for the NURBS design parameters $x = (\Pre, \Pim, w)$, defined in the previous subsection, we propose
	\begin{equation} \label{eq:NURBSpenalty}
	  \Reg(x) = \beta_P \Reg_P (\Pre, \Pim) + \beta_w \Reg_{w_0} (w),
	\end{equation}
  where we penalize the distance between control points $P_j = \Pre_j + i \Pim_j$, and hence to some extent the length of the curve (corresponding to the $H_\IC^1 (0,1)$ seminorm), by
	\[ \Reg_P (\Pre, \Pim) = \frac 1{2 n} \left[ \sum_{j=1}^{n-1} (\Pre_{j+1} - \Pre_j)^2 + (\Pim_{j+1} - \Pim_j)^2 \right]. \]
  To account for the constraint that the NURBS weights are required to be positive we penalize them by
  \begin{align*}
    \Reg_{w_0} (w) & =  \frac 1{\rwaa n} \sum_{j=1}^n f_{w_0}(w_j),
  \end{align*}
  where
    \[ f_{w_0} (w) = \left\{ \begin{array}{ll} 	+\infty				& \mbox{ if } w \leq 0 \\
						w^\rwa				& \mbox{ if } 0 < w < w_0/2 \\
						\cb (w - w_0)^\rwb \qquad 	& \mbox{ if } w \geq w_0/2. \end{array}\right.\]
  Clearly, this functional, which is shown in Figure \ref{fig:fw}, acts as a (quadratic) barrier for \mbox{$w \to +0$} as well as for \mbox{$w \to +\infty$}. In addition, it gives preference to values near a reference weight $w_0 > 0$.
  It is worth noting, however, that $w_0$ does not represent {\em a priori} knowledge on the average size of the weights, because NURBS weights are only determined up to a constant factor. Instead, the choice of $w_0$ in combination with the size of the parameter $\beta_w$ typically impacts the relative difference $\max(w)/\min(w)$ in the approximate solutions.

  \begin{figure}[ht]
	\includegraphics{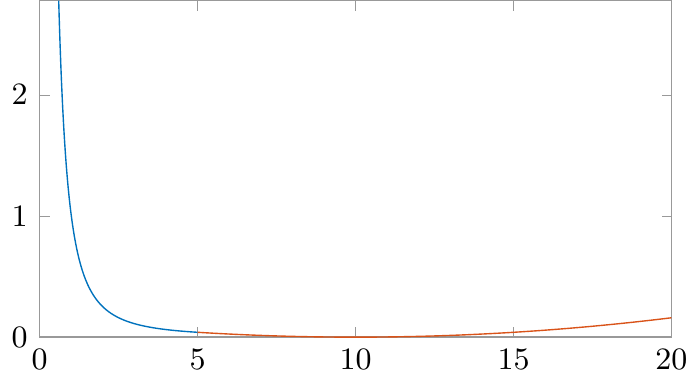} ~ \includegraphics{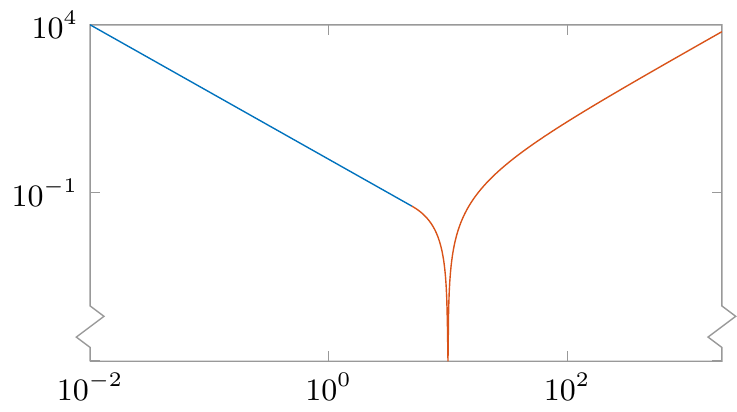}
    \caption{Regularizing functional $f_{w_0} (w)$ for $w_0 = 10$ (left) and its log-log plot (right).}
     \label{fig:fw}
  \end{figure}

\section{Numerical results}   \label{sec:numerics}

\newcommand{\regsol}[1]{#1_{\alpha, \beta}^\noiselevel}   
\newcommand{\regsols}[1]{#1_{\alpha, \beta^*}^\noiselevel}  
\newcommand{\regsolit}[2]{#1_{\alpha, \beta_{#2}}^\noiselevel}

\def\fad{\regsol{f}}
\def\fads{\regsols{f}}
\def\noiselevel{{\sigma, \delta}}
\def\Tad{\regsol{\mathcal{T}}}
\def\xad{\regsol{x}} 
\def\xads{\regsols{x}} 

\def\imgscw{0.49\textwidth}
\def\imgsch{100pt}

  We have tested our method both on synthetic data as well as on real data obtained from an SD-SPIDER apparatus at the Max-Born-Institute for Nonlinear Optics and Short Pulse Spectroscopy in Berlin, Germany. The regularized solutions are NURBS design matrices $\xad = (\regsol{u}, \regsol{v}, \regsol{w})$, minimizing a variational Tikhonov-type functional
      \[ \Tad (x) = \frac{\left\| \FwdD(x) - |\FwdD(x)| \expfct{ i \psinoisy} \right\|_Y^2}{\|\FwdD(x)\|_Y^2} + \frac{\beta}2 \big\| \, |\gamma[x]| - \absnoisy \big\|_X^2 + \alpha \Reg (x), \]
  with penalty term given by (\ref{eq:NURBSpenalty}).
  The corresponding functions in $X = L_\IC^2 (0,1)$ are obtained as $\fad = \gamma [\xad]$. To be precise, we used the functional (\ref{eq:TikhPRrel}) with approximation level $\varepsilon = 10^{-10}$, but as situations with $\| \FwdD(x) \|_Y < \varepsilon$ never occured during the computations, we shall omit this additional parameter for the sake of brevity.

  For finding an approximation of $\xad$ we use a Quasi-Newton method,
    \begin{equation} \label{eq:qNwt}
      x_{j+1} = x_j + \rho_j H_j^{-1} \grad \Tad (x_j),
    \end{equation}
  with step-sizes $\rho_k$ satisfying the Wolfe conditions and BFGS-updates of the approximation $H_k$ of the Hessian.
  It is well-known, that gradient based optimization methods for nonlinear problems generally suffer from local minima \cite{BeKli12}. To obtain a better approximation of the global optimizer, we employ a strategy known as TIGRA (the name being derived from TIkhonov-GRAdient method), which was introduced by Ramlau \cite{Ramlau02}. This method was proven to converge globally for suitable problems and with suitably chosen parameters in \cite{Ramlau02, Ramlau03, WARH15}.

   Aiming at a well-balanced data fit both for $\psinoisy \approx \arg (y^\dag)$ and for $\absnoisy \approx |f^\dag| = |\gamma[x^\dag]|$, we used a similar approach to TIGRA, but with respect to the weight $\beta$ balancing the discrepancy terms (cmp.~Section~\ref{s4}). Starting with large $\beta_0$ and $q<1$ (here: $\beta_0 = 100$, $q = 0.25$), the resulting method is described as follows.
    \begin{itemize}
     \item Find $\regsolit{x}{k} \in \argmin \regsolit{T}{k} (x)$ using (\ref{eq:qNwt}) starting from $x_{k,0} = \regsolit{x}{k-1}$,
    \end{itemize}
   where $\beta_{k} = q \, \beta_{k-1}$.
   Initially, for large $\beta_0$ only $\absnoisy$ is emphasized while reconstructions will not typically provide a good match for $\psinoisy$. Then the weight is gradually shifted giving more and more importance to $\psinoisy$ while the initial good match for $\absnoisy$ declines only mildly. A globalization approach (such as the proposed reweighing of the discrepancy terms, for example) is certainly required to reach an approximate solution of the original problem, but comes at the cost of a higher computational effort. In our case the additional effort is due to the repeated optimization with different values of the weight $\beta_k$.
   To keep the number of iterations to a minimum, however, we may solve the earlier optimization problems inexactly and increase the required precision while the iteration proceeds. In our experiments we have employed this technique using \verb-tol-$_k = \max (\frac {\beta_k}{2000}, 10^{-9})$ as stopping tolerance for $\grad \regsolit{T}{k} (x_{k,j})$, limiting the number of iterations during each optimization procedure to \verb-maxIt- $= 10000$.

   The opposing trends of the two data fidelity terms (cmp.\,Figure \ref{fig:pcr}) during the iteration can be exploited to obtain a stopping rule for the procedure. Considering that our main objective is to optimize the \emph{overall} data fit, we define a weighted relative least-squares functional
    \begin{equation} \label{eq:tls}
      e(x)^2 := 2 \, d(x)^2 + r(x)^2,
    \end{equation}
   in terms of the auxilliary quantities
    \[ d(x)^2 := \frac{\left\| \FwdD (x) - |\FwdD(x)| \expfct{ i \psinoisy} \right\|_Y^2}{\|\FwdD(x)\|_Y^2} \qquad \mbox{and} \qquad r(x)^2 :=  \frac{\left\| \, |\gamma[x]| - \absnoisy \right\|_X^2}{\| \absnoisy \|_X^2}, \]
  and stop the iteration when $e\big(x_{\alpha, \beta_k}^\noiselevel \big)$ reaches its lowest value. Putting more emphasis on the discrepancy term $d(x)$ in $Y$ proved beneficial in the numerical experiments. We remark that this procedure may be regarded as a bilevel optimization approach for choosing the discrepancy weight. Namely, for fixed $\alpha > 0$, we would choose $\beta^*$ as an approximate solution of
  	\begin{align*}
  	  & e(\xad)^2   \to \min_{\beta >0}\\
  	  \mbox{such that } & \xad \in \argmin_{x \in X_n} \Tad (x).
  	\end{align*}
  For our purposes, however, the primary motivation is to improve the global convergence properties of the Quasi-Newton method rather than to solve the latter problem. The final values from the experiments in the following subsections have been collected in Table \ref{tab:pcr}.

  \begin{figure}
	\includegraphics{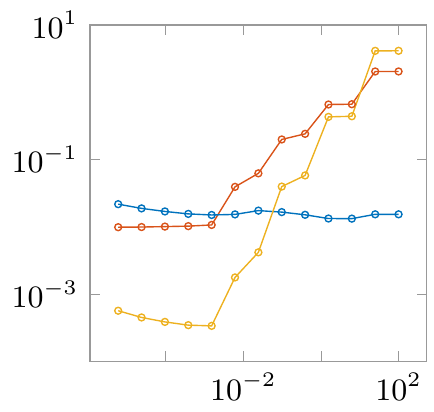} ~ \includegraphics{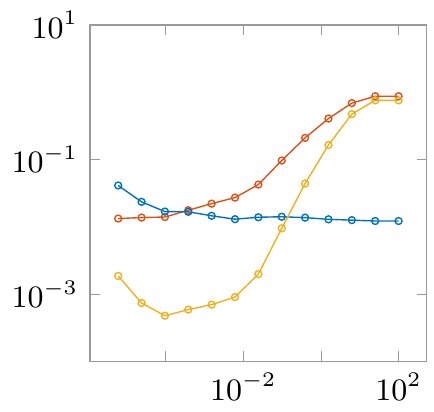} ~ \includegraphics{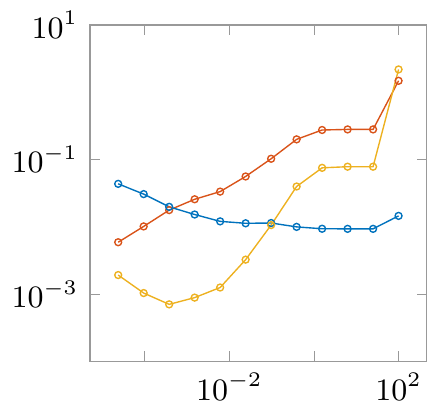}
    \caption{Relative data error quantities $d(\xad)$ (red) and $r(\xad)$ (blue) from the following subsections in dependence on $\beta$. The final value $\beta^*$ is chosen as minimizer of \mbox{$e(\xad)^2$} (yellow).}
     \label{fig:pcr}
  \end{figure}

   We first present some results obtained with synthetic data. In order to stay as close to the realistic data situations as possible, we use the same kernel function throughout the following subsections.
   The kernel shown in Figure \ref{fig:kernel} was obtained from physical modelling of the nonlinear optical processes that result in the measurements (cf.\,\cite{KokBirBetGreSte10, GHBKS13, BSKGBH15}).
   The target phase function $\varphi^\dag = \arg (f^\dag)$ was chosen identical to \cite[Section~3.2]{Ge11} (see also \cite{GHBKS13}), and the noisy data were generated by adding 1\% relative noise.

  \begin{figure}
    \begin{center}
      \includegraphics[width=\imgscw, height=\imgsch]{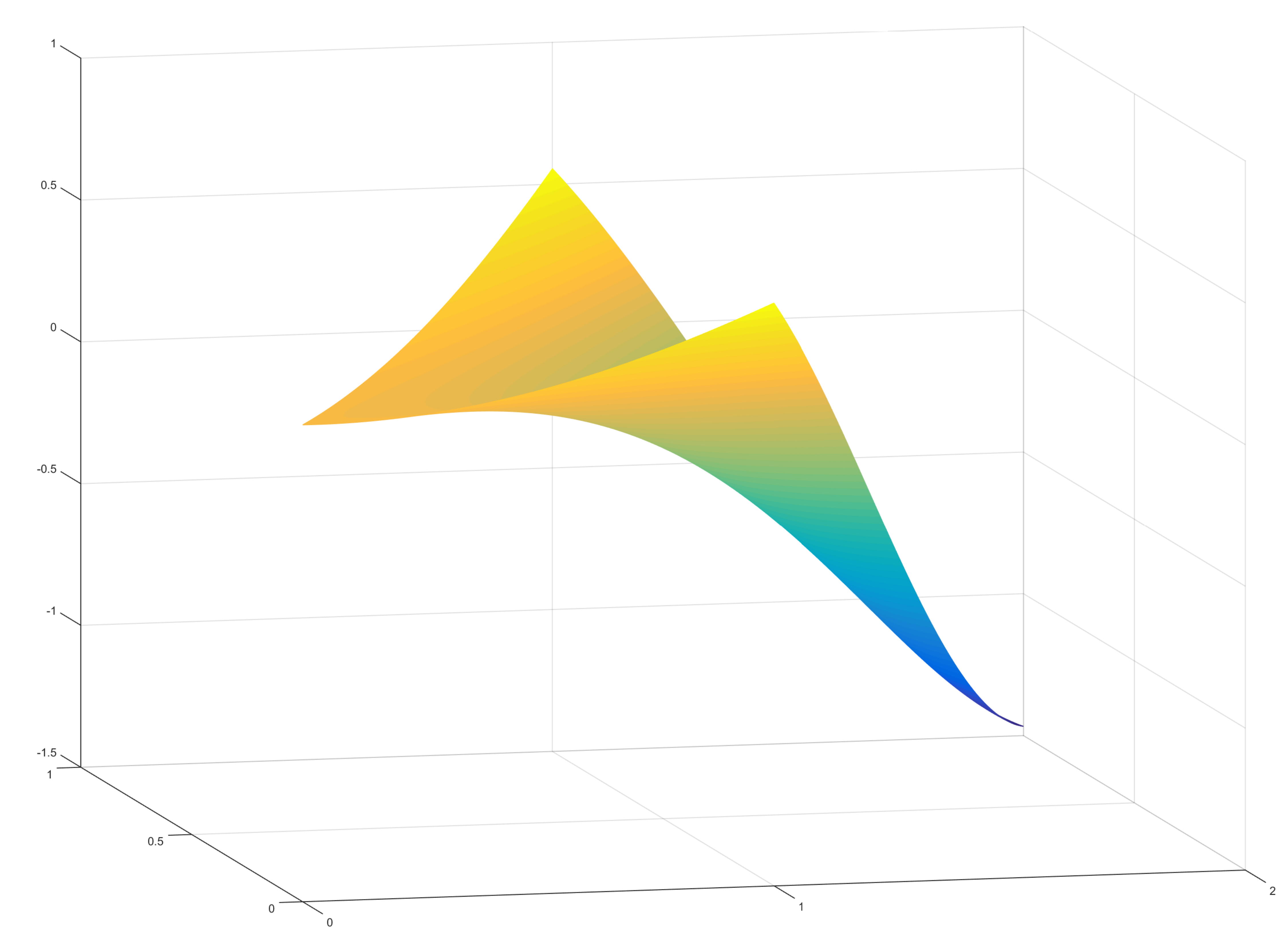}  ~  \includegraphics[width=\imgscw, height=\imgsch]{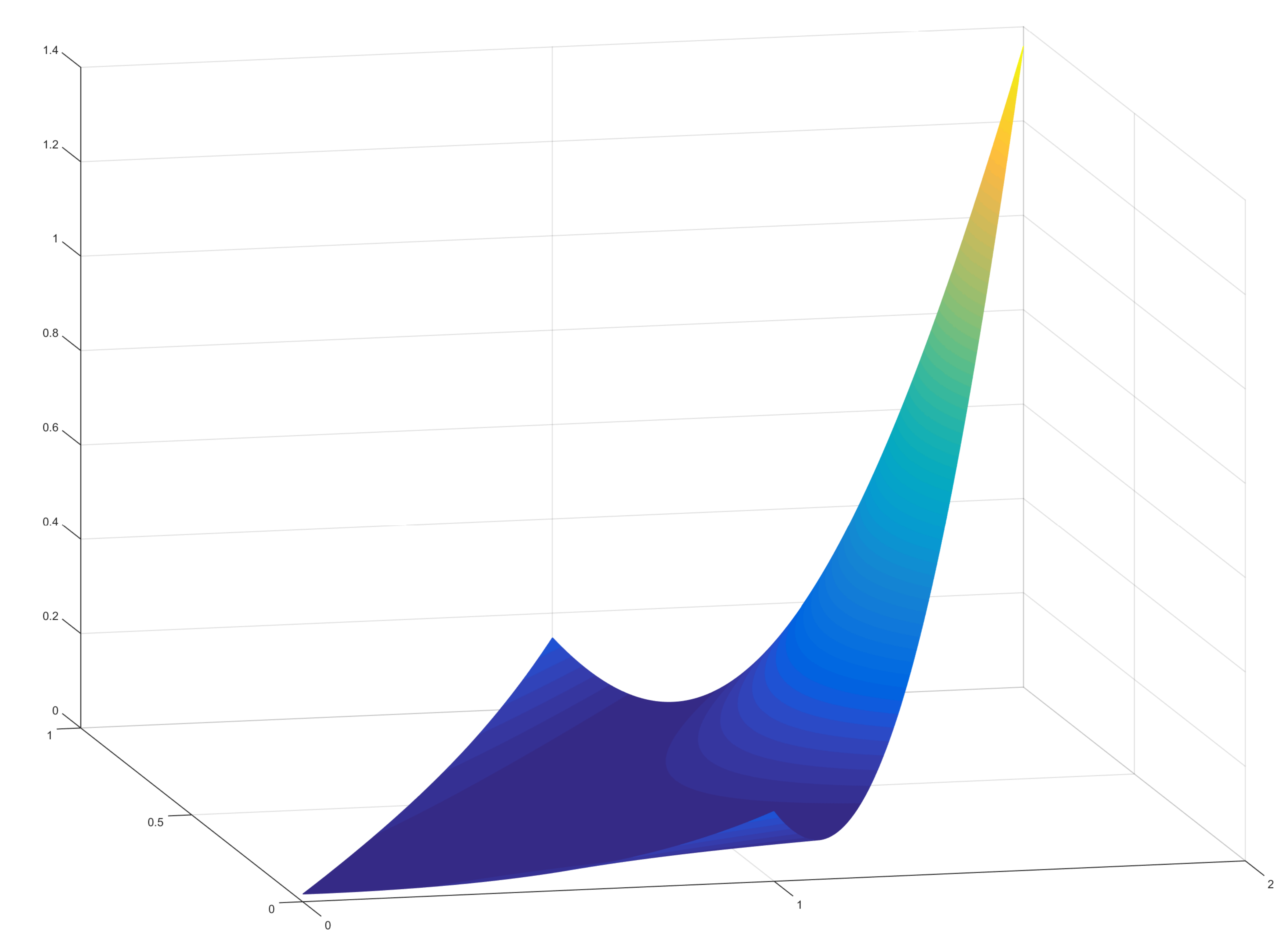}
    \caption{The real (left) and imaginary (right) part of the kernel function.}
     \label{fig:kernel}
    \end{center}
  \end{figure}

  The approximate solutions were represented as complex-valued NURBS curves $f = \gamma [x]$ as introduced in Section \ref{sec:NURBS}. In our experiments we used quadratic splines, i.e., $p=2$, with $n=150$ control points and corresponding positive weights. The knot vector $\knot$, which determines where and how the control points impact the parametrized curve, was chosen as \emph{open uniform}. This is to say that
      \[ \knot_1 = \dots = \knot_{p+1} = 0, \qquad \knot_{n+1} = \dots = \knot_{n+p+1} = 1, \]
  and
      \[ \knot_j = \frac{j-p-1}{n-p} \qquad \mbox{for } p+2 \leq j \leq n. \]
  In $X= L_\IC^2 (0,1)$ we have discretized the resulting curves by choosing $N = 1000$ equidistant sampling points $\tau_k = \frac{k-1}{N-1}$ and, correspondingly, in the image space $Y = L_\IC^2 (0,2)$ by $s_m = \frac{m-1}{N-1}$ for $m = 1, \dots, 2N-1$. Computations were started with an initial guess consisting of constant weights $w_j = w_0 = 10$ and control points $P_j = u_j + i v_j$ interpolated from $\absnoisy$ such that $u_j = v_j \approx \sqrt 2 \cdot \absnoisy \big(\frac{j-1}{n-1}\big)$. In the penalty term (\ref{eq:NURBSpenalty}) we also used $w_0 = 10$ and balanced both terms equally by $\beta_P = \beta_w = 1$. The regularization parameter was chosen as $\alpha = 10^{-6}$.

  \begin{table}[hbpt]
    \begin {center}
    \begin{tabular}{*{7}{r}} 
      \hline \noalign{\smallskip}
			    Data &  CPU-time &   \#Iter & 	      $\beta^*$ &       $d(\xads)$ &         $r(\xads)$ &     $e(\xads)^2$\\
  \hline \noalign{\smallskip}
       $(\absnoisy, \yd)$	 &    1585 s &    19942 &  $3.81 \cdot 10^{-4}$ &        $1.02 \cdot 10^{-2}$ &          $1.55 \cdot 10^{-2}$ &      $4.47 \cdot 10^{-4}$\\
       $(\absnoisy, \psinoisy)$	 &    1663 s &    19391 &  $9.54 \cdot 10^{-5}$ &        $1.39 \cdot 10^{-2}$ &          $1.68 \cdot 10^{-2}$ &      $6.69 \cdot 10^{-4}$\\
       measured			 &    1848 s &    24403 &  $3.81 \cdot 10^{-4}$ &        $1.77 \cdot 10^{-2}$ &          $1.97 \cdot 10^{-2}$ &      $1.01 \cdot 10^{-3}$\\
  \hline
    \end{tabular}\\[5mm]
    \caption{Comparison of results for various data situations from the following subsections with $\beta_0 = 100$, $\alpha = 10^{-6}$.} \label{tab:pcr}
    \end{center}
  \end{table}

  \bigskip

\subsection*{Phase retrieval with synthetic data for y}

\def\imgscw{0.49\textwidth}
\def\imgsch{100pt}
\def\data{absX_fullY}

  We first consider the ideal data situation, assuming that measurements are available for both the modulus and the argument of $\yd \approx y$ as well as for the modulus $\absnoisy \approx |f^\dag| = |\gamma [x^\dag]|$.
  To simulate the real data situation we used the same measurements for $\absnoisy$ as in the final subsection. These measurements are shown in Figure \ref{fig:X_synth_full} together with the reconstructed solution.
  \begin{figure}
   \begin {center}
    \begin{tabular}{cc}
     \includegraphics{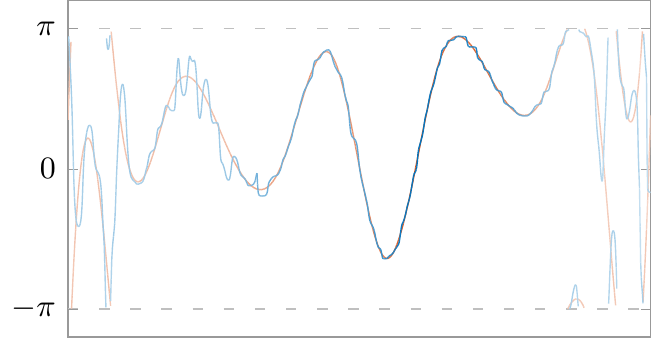} & \includegraphics{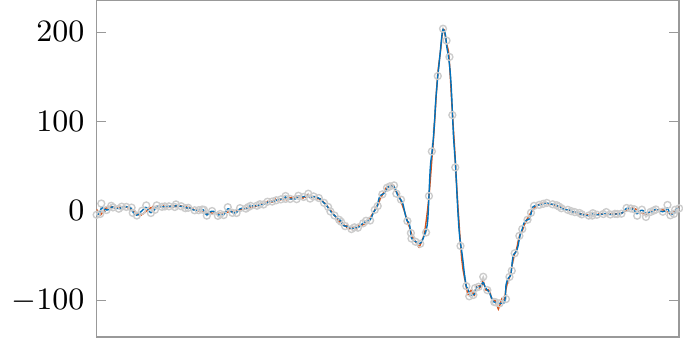}\\
     \includegraphics{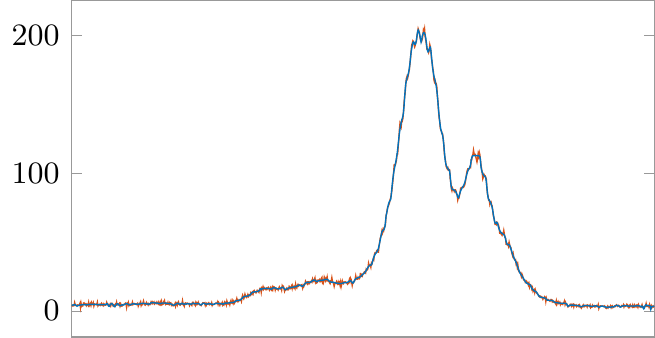} &  \includegraphics{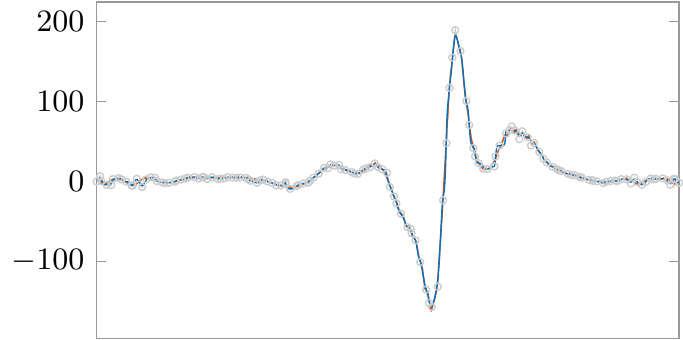}
    \end{tabular}
    \caption{Top-left: Target phase $\varphi^\dag$ (red) and reconstruction $\arg (\fads)$ (blue). Bottom-left: Data $\absnoisy \approx |f^\dag|$ (red) and reconstruction $|\fads|$  (blue). Right column: Real (top) and imaginary (bottom) part of $f^\dag$ (red) and $\fads$ (blue) together with NURBS control points.}
     \label{fig:X_synth_full}
   \end{center}
  \end{figure}

  \begin{figure}
    \begin{center}
     \includegraphics{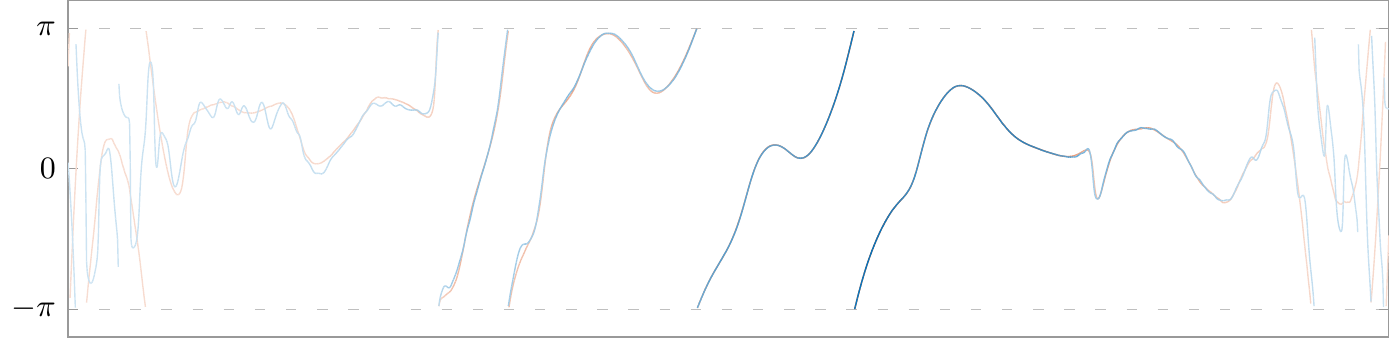}\\
     \includegraphics{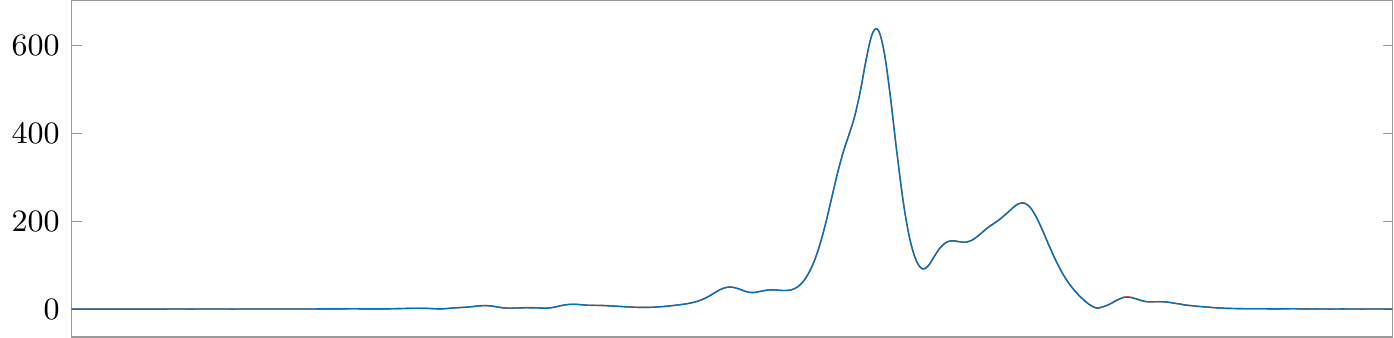}
    \caption{Top: True $\arg (y^\dag)$ (red) and reconstructed $\arg \FwdD(\xads)$ (blue). 
	      The brightness is chosen in (logarithmic) dependence on $|\FwdD(\xads)|$. Bottom: True $|y^\dag|$ (red) and $|\FwdD(\xads)|$ (blue)}
     \label{fig:Y_synth_full}
    \end{center}
  \end{figure}

  In order to be able to work with the same parameter values for $\beta_0$, \verb-tol-$_k$, and $\alpha$ as in the other test cases, we consider here the Tikhonov functional given by
      \[ \Tad (x) = \frac{\left\| \FwdD(x) - \yd \right\|_Y^2}{\|\yd\|_Y^2} + \frac{\beta}2 \big\| \, |\gamma[x]| - \absnoisy \big\|_X^2 + \alpha \Reg (x), \]
  where the NURBS penalty term is as defined by (\ref{eq:NURBSpenalty}) and $\alpha = 10^{-6}$. Similarly, the relative data misfit term
      \[ d(x) = \frac{\left\| \FwdD(x) - \yd \right\|_Y}{\|\yd\|_Y} \]
  was used in the least-squares error functional $e(x)$ defined by (\ref{eq:tls}).

  The results are shown in Figures \ref{fig:X_synth_full} and \ref{fig:Y_synth_full}.
  Note that the real and imaginary parts of the reconstruction $\fads = \gamma[\xads]$ (right column of Figure~\ref{fig:X_synth_full}) evidently provide a good approximation of $f^\dag$. Small oscillations in regions where the modulus $|f^\dag|$ is close to zero, however, result in quite large deviations of their arguments in these areas (left column of Figure~\ref{fig:X_synth_full}). As one might expect reconstructions can therefore only be reliable in those regions where $|\fads| \gg 0$. To emphasize this observation we have weighted the brightness of the plot of $\arg \big( \fads \big)$ and $\arg (f^\dag)$ depending on their absolute values.

  \bigskip

\subsection*{Phase retrieval with synthetic data for arg(y)}

\def\imgscw{0.49\textwidth}
\def\imgsch{100pt}
\def\data{absX_phaseY}

  \begin{figure}
   \begin {center}
    \begin{tabular}{cc}
     \includegraphics{\data_Xphase} & \includegraphics{\data_Xreal}\\
     \includegraphics{\data_Xabs} & \includegraphics{\data_Ximag}
    \end{tabular}
    \caption{Top-left: Target phase $\varphi^\dag$ (red) and reconstruction $\arg (\fads)$ (blue). Bottom-left: Data $\absnoisy \approx |f^\dag|$ (red) and reconstruction $|\fads|$  (blue). Right column: Real (top) and imaginary (bottom) part of $\fads$ together with NURBS control points.}
     \label{fig:X_synth} \label{fig:Xcomplex_synth}
   \end{center}
  \end{figure}

  As another academic example, we consider the phase retrieval problem corresponding to the real data situation in the following subsection. We assume that measurements are available for the argument of $\yd \approx y$ and for the modulus $\absnoisy \approx |f^\dag| = |\gamma [x^\dag]|$.
  Again, we used the available experimental data for $\absnoisy$ which is shown in Figure~\ref{fig:X_synth}.

  The results are shown in Figures \ref{fig:X_synth} and \ref{fig:Yphase_synth}.   When comparing to the ideal data situation where both $|y^\dag|$ and $|f^\dag|$ are approximately known, a certain fall-off in quality is evident. However, especially in those regions where $|f^\dag| \gg 0$, the reconstruction still provides a good approximation of the target.

  \begin{figure}
    \begin{center}
     \includegraphics{\data_Yphase}\\
     \includegraphics{\data_Yabs}
    \caption{Top: Generated data $\psinoisy$ (red) and reconstruction (blue) for $\arg \FwdD(\xads)$.
	      The brightness is chosen in (logarithmic) dependence on $|\FwdD(\xads)|$. Bottom: Target $|y^\dag|$ and reconstructed $|\FwdD(\xads)|$.}
     \label{fig:Yphase_synth}
    \end{center}
  \end{figure}

  \bigskip

\subsection*{Phase retrieval with real data}

   In the real data from optical measurements, the frequency band containing the support of the $\absnoisy \approx |f^\dag|$ is located in between $\omlow = 3.5 \times 10^{15}$\,Hz and $\omup = 4.1 \times 10^{15}$\,Hz, and the frequency of the continuous wave (see\,Section~\ref{s2}) at $\omcw \approx 3.86 \times 10^{15}$\,Hz.

   As above, the Tikhonov functional is given by
      \[ \Tad (x) = \frac{\left\| \FwdD(x) - |\FwdD(x)| \expfct{ i \psinoisy} \right\|_Y^2}{\|\FwdD(x)\|_Y^2} +  \frac{\beta}2 \big\| \, |\gamma[x]| - \absnoisy \big\|_X^2 + \alpha \Reg (x), \]
   with penalty term $\Reg(x)$ defined by (\ref{eq:NURBSpenalty}) and $\alpha = 10^{-6}$. For the TIGRA-type approach with respect to the weight $\beta$, we used $\beta_0 = 10^2$ and $q = 0.25$.
   Keeping in mind that phase data is reliable only in regions where the absolute value of the function is sufficiently large, we again observe a good data fit in Figures \ref{fig:X} and \ref{fig:Yphase}.
   To highlight this observation, we compare several reconstructions, which were obtained using different parameters in Figure~\ref{fig:Xcomp}.

\def\data{meas}

  \begin{figure}[tbp]
   \begin {center}
    \begin{tabular}{cc}
     \includegraphics{\data_Xphase} & \includegraphics{\data_Xreal}\\
     \includegraphics{\data_Xabs} & \includegraphics{\data_Ximag}
    \end{tabular}
    \caption{Top-left: Reconstructed phase $\arg (\fads)$. Bottom-left: Data $\absnoisy \approx |f^\dag|$ (red) and reconstruction $|\fads|$  (blue). Right colomn: Real (top) and imaginary (bottom) part of $\fads$ together with NURBS control points.}
     \label{fig:X} \label{fig:Xcomplex}
   \end{center}
  \end{figure}

  \begin{figure}[tbp]
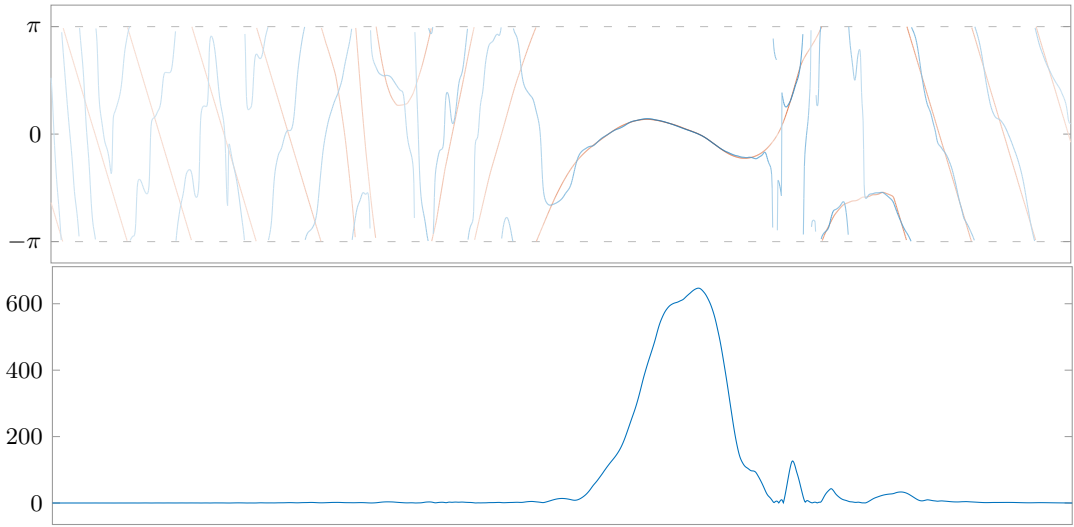

    \begin{center}
     \includegraphics{\data_Yphase}\\
     \includegraphics{\data_Yabs}
    \caption{SD-phase data $\psi^\delta$ (red) and reconstruction (blue) for $\arg \FwdD(x_{\alpha^*}^\delta)$. The brightness is chosen in (logarithmic) dependence on $|\FwdD(x_{\alpha^*}^\delta)|$.}
     \label{fig:Yphase}
    \end{center}
  \end{figure}

  \begin{figure}[tbp]
    \begin{center}
     \includegraphics{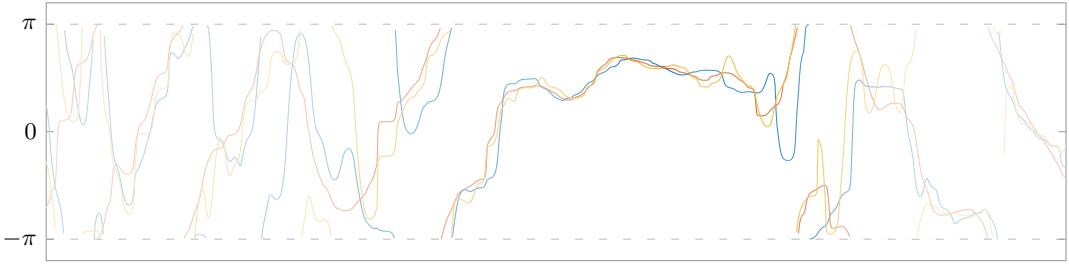}\\
    \caption{Comparison of various phase reconstructions $\arg (\fads)$ for $\alpha = 10^{-6}$ (blue), $5 \cdot 10^{-6}$ (yellow), and $10^{-5}$ (red), showing a close match in the region where $|\fads|$ is large.}
     \label{fig:Xcomp}
    \end{center}
  \end{figure}

\section*{Conclusion}

  In this paper, we have studied complex-valued autoconvolution problems with continuous kernel-functions in different data situations, arising, for example, in the characterization of ultrashort laser pulses by means of the SD-SPIDER method. We have derived fundamental analytical properties, in particular, weak-to-weak continuity in $L_\IC^2$-spaces of the autoconvolution operator, which ensure well-posed\-ness of regularization approaches by minimizing Tikhonov-type functionals either in $L_\IC^2$ or in compactly embedded subspaces, depending on the data at hand. Inspired by the TIGRA method and using discretizations in terms of NURBS curves, we have also proposed a novel globalized numerical method for phase retrieval problems corresponding to the real-world data situation, which is able to find solutions that appropriately reproduce the given data.

  Nevertheless, a number of open questions remain. Convergence rates results are, to be best of our knowledge, completely unavailable except for very special cases, and we have proven that classical nonlinearity conditions such as the tangential cone condition are not suitable to tackle autoconvolution problems. Concerning the noise model, further improvements might be possible with discrepancy terms that capture more adequately the multiplicative noise structure on the complex unit sphere for measurements of the complex phase function. Finally, we expect that an even better data fit could be achieved numerically by devising a method for blind deautoconvolution, i.e., by including the kernel function as a free or parameter-dependent variable in the optimization method.

\section*{Acknowledgement}

The first three authors greatly appreciate financial support by the German Research Foundation (DFG) under grants HO~1454/9-1 (SA and BH) and FL~832/1-2 (SB), respectively.

\section*{Appendix}

\subsection*{Proof of Proposition \ref{pro:weaklycont}}
We consider a sequence $\{f_n\}_{n \in \N} \in X$ with $f_n \rightharpoonup f_0$ in $X$ as $n \to \infty$ and show that $ \lim \limits_{n \to \infty}\langle F(f_n)-F(f_0),\eta\rangle_Y=0$ for all $\eta \in Y$, which proves the proposition.
Indeed, by Fubini's theorem we have
  \begin{align*}
    \langle F(f_n) & - F(f_0), \eta \rangle_Y\\
	& = \int \limits _0^2\int \limits_{\max(s-1,0)}^{\min(s,1)} k(s,\tau)(f_n(s-\tau)f_n(\tau)-f_0(s-\tau)f_0(\tau))\de \tau\, \eta(s) \de s\\
	& = \int \limits_0^2 \left[\int\limits_{\max(s-1,0)}^{\min(s,1)}  k(s,\tau)(f_n(s-\tau)-f_0(s-\tau))(f_n(\tau)+f_0(\tau))\de \tau \right]\, \eta(s) \de s
  \end{align*}
  \begin{align*}
	&  = \int \limits_0^1 (f_n(\tau)+f_0(\tau))\left[\int_\tau^{\tau+1} k(s,\tau)(f_n(s-\tau)-f_0(s-\tau))\eta(s) \de s \right] \de \tau\\
	&  = \int \limits_0^1 (f_n(\tau)+f_0(\tau))\left[\int \limits_0^{1} k(\xi+\tau,\tau)(f_n(\xi)-f_0(\xi))\eta(\xi+\tau) \de \xi\right] \de \tau.
  \end{align*}
If we use the settings $\Delta_n(\xi):=f_n(\xi)-f_0(\xi)$, $\Theta_\tau(\xi):=k(\xi+\tau,\tau)\eta(\xi+\tau)$ and $\Xi_n(\tau):=\int_0^1 \Theta_\tau(\xi) \Delta_n(\xi) \de \xi$, where $\Theta_\tau \in X$ for all $\tau \in [0,1]$
due to the continuity of the kernel $k$
and $\Delta_n,\Xi_n \in X$ for all $n \in \N$, we arrive at
\[\langle F(f_n)-F(f_0),\eta\rangle_Y =\int\limits_0^1 (f_n(\tau)+f_0(\tau)) \Xi_n(\tau) \de \tau = \langle f_n + f_0 , \Xi_n \rangle_X.\]
By some calculations it can be shown that the family of functions $\Xi_n,\;n \in \N,$ is equicontinuous on the interval $[0,1]$. On that interval, the sequence $\{\Xi_n\}_{n \in \N}$ converges  pointwise to zero, because
$\Delta_n \rightharpoonup 0$ in $X$ implies
$$ \Xi_n(\tau)= \langle \Theta_\tau,\Delta_n \rangle_X \to 0 \quad \mbox{as} \quad n \to \infty \quad \mbox{for all} \quad \tau \in [0,1].$$
However, an equicontinuous and pointwise convergent sequence of functions is even uniformly convergent, which yields $\lim_{n\to \infty}\|\Xi_n\|_X=0.$
Then the limit condition
\[\left|\langle F(f_n)-F(f_0),\eta\rangle_Y \right| = \left|\langle f_n + f_0 , \Xi_n \rangle_X \right| \le \|f_n+f_0\|_X \, ||\Xi_n||_X \to 0\quad \mbox{as} \quad n \to \infty, \]
which is based on the fact that $\|f_n+f_0\|_X$ is bounded, completes the proof.
\hspace*{\fill} \qed \vspace{5mm}

\subsection*{Proof of Proposition \ref{pro:noncompact}}

In the special case $k\equiv 1$ on $\suppk$ from (\ref{eq:suppk}), the proof would be based on the fact that we have $F(h_n)\not \to 0$ for $h_n(t):=\expfct{int} \rightharpoonup 0$. For a general kernel function $k$, however, we have to take into account some more details. Since $k$ is not identically zero, there is some $(s_0,\tau_0) \in {\rm int}(\suppk)$ with $\underline{k}:=k(s_0,\tau_0)\ne0$. Due to the continuity of the kernel function $k$ on $\suppk$ there exists an open neighborhood $\mathcal{U}_0$ of $(s_0,\tau_0)$ with
$|k(s,\tau)-\underline{k}|\le \frac{|\underline{k}|}2$ for all $(s,\tau)\in \mathcal{U}_0 \cap {\rm int}(\suppk)$. If we define $D:=\{(2\tau,\tau):\,0 \le \tau \le 1\}$, then there exists $(s_1,\tau_1)$ and $\varepsilon>0$ such that with
\[\mathcal{U}_1:=\{(s,\tau)\in {\rm int}(\suppk):\, \tau_1 \le \tau \le \tau_1 + \varepsilon,\, s_1-\tau_1\le s-\tau \le s_1-\tau_1 + \varepsilon  \}  \]
we have $\mathcal{U}_1\subset \mathcal{U}_0 \cap {\rm int}(\suppk)$ and $\mathcal{U}_1\cap D=\emptyset$. It is not hard to show that
\begin{equation}\label{eq:lowint}
 \left|\int k(s,\tau) h(s-\tau) h(\tau) \diff \tau\right|\ge \left| \underline{k} \right| \left|\int h(s-\tau) h(\tau) \diff \tau \right|  - \frac{|\underline{k}|}2 \int \left| f(s-\tau) f(\tau) \right|\diff \tau
\end{equation}
for $h\in L^2_\IC(0,1)$ and $(s,\tau)\in \mathcal{U}_1$.
Now we define sequences $\{h_n\}_{n\in\IN}$ and $\{f_n\}_{n\in\IN}$ by
  \[h_n(\tau):=\begin{cases} \expfct{ i n \left(\frac{\tau-\tau_1}\varepsilon\right)} & \text{ for } \tau_1 \le \tau \le \tau_1 + \varepsilon\\ \expfct{ i n \left(\frac{\tau-s_1+\tau_1}\varepsilon\right)} & \text{ for } s_1-\tau_1\le t \le s_1-\tau_1+\varepsilon \\ 0 & \text{ else}   \end{cases} \qquad  f_n:=f+r\,h_n.\]
Obviously $h_n\rightharpoonup0,\;f_n\rightharpoonup f$ in $X$, and due to the weak continuity of $F$ (cf.~Proposition~\ref{pro:weaklycont}) $F(f_n)\rightharpoonup F(f)$ in $Y$.
With (\ref{eq:lowint}) at hand it is easy to show that
\[\liminf_{n\to \infty} ||F(h_n)||_Y>0,\]
which means that $F(h_n)\not \to 0$ in $Y$. Taking into account that the Fr\'echet derivative is compact and therefore completely continuous we have $F^\prime(f)\,h_n \to 0$ in $Y$. Together with formula (\ref{eq:equa}), we then obtain
 $$ F(f_n) - F(f)= r^2\,F(h_n) + r\, F'(f)\,h_n \quad \mbox{and thus} \quad F(f_n)\not \to F(f).$$
 The compactness of $F^\prime(f)$ is an immediate consequence of the fact that (\ref{eq:Frech}) is a linear Fredholm integral operator with square integrable kernel. Such operators are always Hilbert-Schmidt operators and thus compact. The nonlinear operator $F$, however, is not compact, since the sequence $\{f_n\}_{n \in \N} \subset X$ is bounded and weakly convergent to $f$ but the associated sequence $\{F(f_n)\}_{n \in \N} \subset Y$
 cannot have a convergent subsequence. This completes the proof.
\hspace*{\fill} \qed \vspace{5mm}

\subsection*{Proof of Proposition \ref{pro:tikhrelmin}}
     Let $\{ f_n \} \subset X_0$ be a sequence such that
      \[ \mathcal{T}_{\varepsilon, \alpha, \beta}^{\sigma, \delta} (f_n) \to \inf_{f \in X_0} \mathcal{T}_{\varepsilon, \alpha, \beta}^{\sigma, \delta} (f) < \infty. \]
     Passing to a subsequence if necessary, we may assume that $\|F(f_n)\|_Y \geq \varepsilon$ holds for all $n$. Due to the continuity of $F(f)$, the pre-image of the closed set $\{ \|y\|_Y \geq \varepsilon \}$,
	\[ A_\varepsilon := \{ f \in X \st \|F(f)\|_Y \geq \varepsilon \} \]
     is a closed subset of $X = L_\IC^2 (0,1)$. Now any accumulation point $\bar f$ of $\{ f_n \}$ with respect to the weak topology in $X_0$ ($\bar f$ exists due to the stabilizing properties of $\Reg(f)$) is a strong accumulation point of $\{ f_n \}$ in $X$ and hence contained in $A_\varepsilon$. Taking a subsequence, again denoted by $\{ f_n \}$, such that $f_n \wto \bar f$ in $X_0$ we thus have $f_n \to \bar f$ in $X$ and $F(f_n) \to F(\bar f)$ in $Y$ with $\| F(\bar f) \|_Y \geq \varepsilon$. This yields
      \[ \frac{F(f_n)  - |F(f_n)| \expfct{i \psinoisy}}{\| F(f_n) \|_Y} \to \frac{F(\bar f)  - |F(\bar f)| \expfct{i \psinoisy}}{\| F(\bar f) \|_Y} \mbox{ in } Y \qquad \mbox{and} \qquad |f_n| \to |\bar f| \mbox{ in } X. \]
     In combination with the weak lower semicontinuity of $\Reg(f)$ in $X_0$ we therefore obtain
      \begin{align*}        
	\mathcal{T}_{\varepsilon, \alpha, \beta}^{\sigma, \delta} (\bar f) \leq \liminf_{n \to \infty} \mathcal{T}_{\varepsilon, \alpha, \beta}^{\sigma, \delta} (f_n) = \inf_{f \in X_0} \mathcal{T}_{\varepsilon, \alpha, \beta}^{\sigma, \delta} (f)
      \end{align*}
     and the proof is complete.
\hspace*{\fill} \qed \vspace{5mm}

\bigskip

\end{document}